\theoremstyle{plain}
\newtheorem{lemma}{Lemma}[section]
\newtheorem{proposition}[lemma]{\textbf{Proposition}}
\newtheorem{theorem}[lemma]{\textbf{Theorem}}
\theoremstyle{definition}
\newtheorem{definition}[lemma]{\textbf{Definition}}
\newtheorem{example}[lemma]{\textbf{Example}}
\newtheorem*{notation}{\textbf{Notation}}
\newtheorem{remark}[lemma]{Remark}
\newcommand{\N}{\mathbb{N}}
\newcommand{\Z}{\mathbb{Z}}
\newcommand{\R}{\mathbb{R}}
\newcommand{\C}{\mathbb{C}}
\newcommand{\p}{\mathbb{P}}
\newcommand{\G}{G}
\newcommand{\calli}{H}
\algrenewcommand\algorithmicwhile{\textbf{While}}
\algrenewcommand\algorithmicfor{\textbf{For}}
\algrenewcommand\algorithmicdo{\textbf{Do}}
\algrenewcommand\algorithmicif{\textbf{If}}
\algrenewcommand\algorithmicthen{\textbf{Then}}
\algrenewcommand\algorithmicelse{\textbf{Else}}
\algrenewcommand\algorithmicend{\textbf{End}}
\algrenewcommand\algorithmicreturn{\textbf{Return}}
\newcommand{\M}[1]{\overline{\mathscr{M}}_{0,#1}}
\newcommand{\config}[2]{\mathscr{K}_{#1}^{#2}}
\newcommand{\redconfig}[2]{\widetilde{\mathscr{K}}_{#1}^{#2}}
\newcommand{\chow}[1]{K_{#1}}
\newcommand{\redchow}[1]{\widetilde{K}_{#1}}
\newcommand{\parameter}{\mathcal{U}}
\newcommand{\pcount}[1]{\mathbf{c}_2(#1)}
\newcommand{\scount}[1]{\mathbf{c}_{S^2}(#1)}
\newcommand{\lef}{L}
\newcommand{\righ}{R}
\newcommand{\cente}[1]{C_{#1}}
\newcommand{\quadform}[2]{\texttt{QuadForm(}{#1}\texttt{, }{#2}\texttt{)}}
\newcommand{\class}[1]{\texttt{S$^2$-Class(#1)}}
\newcommand*{\hexagon}{\foreach \i/\l/\s in {0/$P_2$/P2, 1/$P_0$/P0, 2/$P_1$/P1, 3/$Q_2$/Q2, 4/$Q_0$/Q0, 5/$Q_1$/Q1} \node[vertex, label={[labelsty]60*\i+30:\l}] (\s) at (60*\i+30:1) {};}
\colorlet{colG}{DarkSeaGreen}
\definecolor{colR}{HTML}{CC6677}
\definecolor{colO}{HTML}{DDCC77}
\definecolor{colB}{HTML}{6699CC}
\colorlet{colGray}{white!60!black}
\colorlet{colbg}{white}
\colorlet{colfg}{black}
\colorlet{colgraphv}{colfg!75!colbg}
\colorlet{colgraphe}{colfg!55!colbg}
\colorlet{colanchor}{colG} 
\colorlet{colvm}{colR} 
\colorlet{colvh}{colgraphv!60!colbg}
\colorlet{coleh}{colvh!40!colbg} 
\tikzstyle{vertex}=[fill=colgraphv,circle,inner sep=0pt, minimum size=4pt]
\tikzstyle{anchorvertex}=[vertex, colanchor]
\tikzstyle{mvertex}=[vertex, colvm]
\tikzstyle{vertexh}=[vertex, colvh]
\tikzstyle{edge}=[line width=1.5pt,colgraphe]
\tikzstyle{edgeh}=[edge,coleh]
\tikzstyle{labelsty}=[font=\scriptsize]
\tikzstyle{dedge}=[
\title{Calligraphs and sphere realizations}
\date{}
\author{%
Matteo Gallet$^{\diamond}$%
\and
Georg Grasegger$^{\circ}$%
\and
Niels Lubbes$^{\ast}$%
\and
Josef Schicho
}
\begin{document}
\maketitle
\footnotetext{\hspace{0.15cm}$^\diamond$ The project leading to this paper started when Matteo Gallet was supported by the Austrian Science Fund (FWF): P33003. At the moment, he is member of the Gruppo Nazionale Strutture Algebriche, Geometriche e le loro Applicazioni (GNSAGA) of the Istituto Nazionale di Alta Matematica (INdAM).\\%
$^\circ$ Supported by the Austrian Science Fund (FWF): P31888.\\
$^\ast$ Supported by the Austrian Science Fund (FWF): P33003.%
}

\begin{abstract}
 We introduce a recursive procedure for computing the number of realizations of a minimally rigid graph on the sphere up to rotations.
 We accomplish this by combining two ingredients.
 The first is a framework that allows us to think of such realizations as of elements of a moduli space of stable rational curves with marked points.
 The second is the idea of splitting a minimally rigid graph into two subgraphs, called calligraphs, that admit one degree of freedom
 and that share only a single edge and a further vertex.
 This idea has been recently employed for realizations of graphs in the plane up to isometries.
 The key result is that we can associate to a calligraph a triple of natural numbers with a special property:
 whenever a minimally rigid graph is split into two calligraphs,
 the number of realizations of the former equals the product of the two triples of the latter,
 where this product is specified by a fixed quadratic form.
 These triples and quadratic form codify the fact that
 we express realizations as intersections of two curves on the blowup of a sphere along two pairs of complex conjugate points.
\end{abstract}

\section{Introduction}
\label{introduction}

Given a graph without loops or multiedges, and a choice of length for each of its edges,
one can ask in how many ways one can realize the graph in the plane
so that the distance between two vertices connected by an edge coincides with the given edge length.
Clearly, once a realization compatible with the edge length assignment is found,
applying any isometry of the plane yields another realization compatible with the same edge length assignment.
Hence, it is reasonable to consider realizations up to isometries.
In this context, one family of graphs emerges as particularly interesting,
namely those graphs that admit only finitely many realizations, up to isometries, compatible with algebraically independent edge lengths.
These graphs are called \emph{minimally rigid}.
If we allow complex coordinates for the points of the realizations, then the number of realizations, up to isometries, of a minimally rigid graph~$G$ is the same, regardless of the choice of algebraically independent edges; it is denoted by~$\pcount{\G}$.
Similarly as in the plane, one can consider the problem of computing the number of realizations on the sphere up to direct isometries,
i.\,e., rotations.
In this case, the number is denoted by~$\scount{\G}$.
It turns out that graphs that are minimally rigid for the plane are also minimally rigid for the sphere;
see \cite{Pogorelov1973, Saliola2007, Izmestiev2009, Eftekhari2018}.

Computing the number of realizations, up to isometries, of a minimally rigid graph,
or obtaining bounds for this number, is a question that has attracted recent interest,
as witnessed by several works;
see, for example, \cite{Borceas2004, Steffens2010, Emiris2010, Capco2018, Jackson2019, Grasegger2020, Bartzos2021a, Bartzos2021b, Bartzos2023, Sadjadi2021}.

\minisec{Our result}
In this paper, we combine the technique developed in~\cite{Grasegger2022} with the framework from~\cite{Gallet2020} to provide new ways for counting complex realizations of minimally rigid graphs on the sphere up to rotations.

\minisec{Previous work}
The technique developed in~\cite{Grasegger2022}, which can be applied to realizations in the plane,
splits a minimally rigid graph into two \emph{calligraphs} that share an edge and a further vertex.
A calligraph is a graph that, differently from a rigid one, admits a one-dimensional set of realizations up to isometries
when algebraically independent edge lengths are chosen; see \Cref{figure:calligraphs}.
\begin{figure}[ht]
\centering
\begin{tikzpicture}
 \begin{scope}
  \node[vertex] (1) at (0,0) {};
  \node[vertex] (2) at (2,0) {};
  \node[vertex] (3) at (2,2) {};
  \node[vertex] (4) at (0,2) {};
  \node[vertex] (0) at (1,4) {};
  \draw[edge] (1)edge(2) (2)edge(3) (3)edge(4) (4)edge(1) (3)edge(0) (4)edge(0);
 \end{scope}
 \begin{scope}[xshift = 4cm]
  \node[vertex] (1L) at (0,0) {};
  \node[vertex] (2L) at (0,2) {};
  \node[vertex] (3L) at (0,4) {};
  \node[vertex] (1R) at (2,0) {};
  \node[vertex] (2R) at (2,2) {};
  \node[vertex] (3R) at (2,4) {};
  \draw[edge] (1L)edge(1R) (1L)edge(2R) (1L)edge(3R) (2L)edge(1R) (2L)edge(2R) (2L)edge(3R) (3L)edge(1R) (3L)edge(2R);
 \end{scope}
\end{tikzpicture}
 \caption{Examples of calligraphs.}
 \label{figure:calligraphs}
\end{figure}
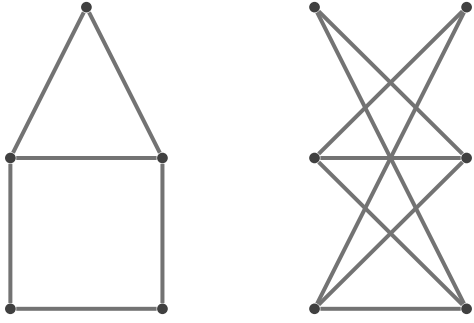
The realizations of the minimally rigid graph are then given by the intersections of the two curves, called \emph{coupler curves},
obtained by tracing the common vertex of the two calligraphs.
One could then think that it would be enough to understand the degree of the two coupler curves,
and to use B\'ezout's theorem to compute their number of intersections.
However, this turns out not to provide the correct number,
which means that we are in presence of intersections ``at infinity''.
The key observation is that,
even if we change the edge lengths of the minimally rigid graph,
the coupler curves of the two calligraphs may always intersect at two points at infinity.
Even more: these curves may pass through these two points with the same slope,
which in classical algebraic geometry language can be expressed by saying that
they further pass through four more points, which are ``infinitely near'' to the two initial ones.
Summing up, whatever minimally rigid graph we start from,
the coupler curves of two calligraphs that form a split may intersect at six fixed points,
regardless of the choice of edge lengths.

A standard technique in algebraic geometry is to change the space where the curves lie
in such a way that these unavoidable intersections are taken into account by the geometry of the space itself.
Employing this approach in the specific situation of calligraphs amounts to
moving from the plane to a particular surface (i.\,e., the blowup of~$\p^2$ at the fixed intersections)
and to use its intersection theory.
In the plane, as far as intersections are concerned, the only relevant information about a curve is its degree:
this is a way to re-state B\'ezout's theorem.
In other words, the intersection theory of the plane attaches to every curve a single integer
and to the intersection of two curves the product of these numbers.
In general, the geometry of a surface specifies
how to attach a tuple of integers to each curve lying on it
and a quadratic form to the whole surface:
the quadratic form specifies how to ``multiply'' two tuples in order to get the number of intersections of the two corresponding curves.

In the case of calligraphs, the authors of \cite{Grasegger2022} attach a triple of integers, called the \emph{class}, to each calligraph.
The problem of computing the number~$\pcount{\G}$ of realizations in the plane up to isometries for a minimally rigid graph~$\G$ then reduces to computing the class~$(a,b,c)$ of any given calligraph~$\calli$:
in fact, once we split the graph~$\G$ into two calligraphs, it is enough to multiply the corresponding classes according to a specific quadratic form.
The solution to the problem of computing the class is to construct, out of the calligraph~$H$, three minimally rigid graphs~$\calli_1$, $\calli_2$, and~$\calli_3$.
Each of these three graphs splits into~$\calli$ and one of three ``basic calligraphs'' with at most four vertices, for which the classes are known;
see \Cref{figure:gadgets}.
Computing the number of realizations of $\calli_1$, $\calli_2$, and $\calli_3$ allows us to recover $(a,b,c)$.
This enables us to set up a recursive scheme for the computation of~$\pcount{\G}$.
\begin{figure}[ht]
\centering
\begin{tikzpicture}
 \begin{scope}
  \node[vertex] (1) at (0,0) {};
  \node[vertex] (2) at (2,0) {};
  \node[vertex] (0) at (2,2) {};
  \draw[edge] (1)edge(2) (1)edge(0);
  \node[] at (1,-1) {$\lef$};
 \end{scope}
 \begin{scope}[xshift=4cm]
  \node[vertex] (1) at (0,0) {};
  \node[vertex] (2) at (2,0) {};
  \node[vertex] (0) at (0,2) {};
  \draw[edge] (1)edge(2) (2)edge(0);
  \node[] at (1,-1) {$\righ$};
 \end{scope}
 \begin{scope}[xshift=8cm]
  \node[vertex] (1) at (0,0) {};
  \node[vertex] (2) at (2,0) {};
  \node[vertex] (0) at (1,2) {};
  \node[vertex] (4) at (1,1) {};
  \draw[edge] (1)edge(2) (1)edge(4) (2)edge(4) (4)edge(0);
  \node[] at (1,-1) {$\cente{v}$};
 \end{scope}
\end{tikzpicture}
 \caption{The three ``basic calligraphs'', whose classes are known to be $(1,1,0)$ for $\lef$, $(1,0,1)$ for $\righ$, and $(2,0,0)$ for $\cente{v}$.}
 \label{figure:gadgets}
\end{figure}
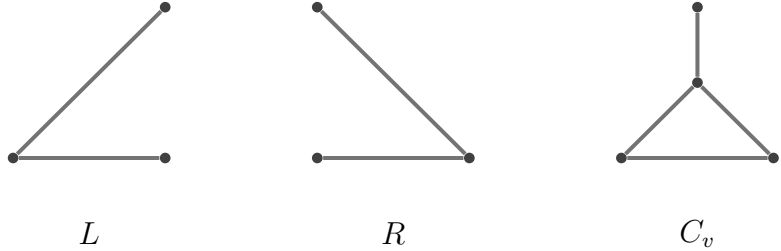

We see in this paper that, maybe surprisingly, the number~$\scount{\G}$ of realizations on the sphere up to rotations can be computed in the same way.
However, the proof technique of this result is fundamentally different from the one of~\cite{Grasegger2022}
and, additionally, it avoids several technical issues that are present in the plane case.
This is due to the fact that, in the case of the sphere,
we have a well-behaved ambient space for the sets of realizations of graphs up to sphere rotations,
which are called \emph{configurations}.
In fact, the paper~\cite{Gallet2020} explains how we can think of configurations of a graph on the sphere
as elements of the so-called \emph{moduli space~$\M{2n}$ of rational stable curves with marked points},
in which two marked points correspond to one vertex of the graph and $n$ is the number of vertices.
Here, a \emph{rational stable curve} is a connected union of copies of the projective line that intersect each other transversely in at most one point.
The moduli space~$\M{2n}$ is a geometric object (more precisely, a smooth projective variety) whose elements correspond to equivalence classes of rational stable curves on which $2n$ points (the markings) are specified, up to the action of the automorphisms of the projective line.
The paper~\cite{Gallet2020} provides an algorithm to compute the number of configurations of a minimally rigid graph
by exploiting the structure of the Chow ring of the moduli space~$\M{2n}$.
The Chow ring of an algebraic variety is a ring whose elements, roughly speaking, are the subvarieties of the given variety and in which the product of two elements corresponds to the intersection of subvarieties.
In other words, the Chow ring encodes the intersection theory of an algebraic variety.
In our case, for each graph~$\G$ we can consider the subvariety~$\config{\G}{\Lambda}$ of configurations of~$\G$ with an assignment~$\Lambda$ of edge lengths.
This determines an element~$\chow{\G}$ in the Chow ring of~$\M{2n}$, which does not depend on~$\Lambda$.
When $\G$ is minimally rigid and $\Lambda$ is given by algebraically independent values, the subvariety~$\config{\G}{\Lambda}$ is a collection of points, one for each configuration of~$\G$.
The cardinality of~$\config{\G}{\Lambda}$ then equals~$\scount{G}$.
In turn, this cardinality is equal to the so-called \emph{degree} of~$\chow{G}$;
we can then perform the computation of~$\scount{G}$ by working only in the Chow ring,
since in addition we have that $\chow{\G}$ can be written as a product of particular elements corresponding to codimension one subvarieties (the so-called \emph{divisors}),
one for each of the edges of the graph.
This allows us to set up a recursive procedure for the computation of the degree of~$\chow{\G}$.

\minisec{The technique}
The goal of this paper is to enhance the algorithm just described to compute the number~$\scount{\G}$ by the same procedure of splitting a minimally rigid graph into calligraphs explained in the case of the plane.

A powerful feature of moduli spaces of stable curves with marked points is that
they come equipped with \emph{forgetful morphisms} of the form $\M{2n} \longrightarrow \M{2m}$ whenever $m \leq n$,
which forget a certain number of marked points of a rational stable curve.
Via these morphisms, we can relate the configuration space of a minimally rigid graph~$\G$
to those of two calligraphs~$\G_1$ and~$\G_2$ constituting a split of~$\G$
and to the one of the graph~$\G_{12}$ constituted of the common edge and vertex of~$\G_1$ and~$\G_2$.
All these morphisms are both proper and flat,
which means that they are most suited for operating between Chow rings.
Using them, we can express $\chow{\G}$
as a product of three contributions, coming from~$\G_1$, $\G_2$, and~$\G_{12}$
(these contributions are pullbacks, under suitable forgetful morphisms, of classes
determined by the corresponding graphs in the Chow rings of the appropriate moduli spaces).
By elementary computations in intersection theory,
we show that we can ``pushforward'' all these computations to the moduli space corresponding to~$\G_{12}$.
There, what we get is a product of two quantities depending on~$\G_1$ and~$\G_2$.
This product reduces to computing the intersection of two curves lying on the configuration space of~$\G_{12}$.
It turns out that the latter is isomorphic to the blow up of~$\p^1 \times \p^1$ at four points.
Therefore, in terms of intersection theory, each of the two curves is identified by six integers,
which actually become three because these curves have a real structure.
Hence, we are in the same setting as in the case of realizations in the plane described before
and so we can apply the same recursive strategy,
attaching to each calligraph a triple of integer numbers,
which we call the \emph{$S^2$-class}.

\minisec{Structure of the paper}
The paper is structured as follows.
In \Cref{minimally_and_calligraphs}, we recall the notions of minimally rigid graph, of calligraph, and of split of a minimally rigid graph as the union of two calligraphs that share a common edge and a common vertex.
At the end, we state the main result of this paper, \Cref{theorem:main}.
In \Cref{configurations}, we briefly summarize how to encode the realizations of a graph on the sphere up to rotations as a subvariety of a moduli space of rational stable curves with marked points.
\Cref{reduction} and \Cref{description} are the technical core of the paper.
\Cref{reduction} proves how it is possible to reduce the computation of the degree of the configuration space of a minimally rigid graph
to the degree of the intersection of three surfaces in a three-dimensional moduli space.
\Cref{description} provides an explicit description of one of these surfaces,
which turns out to be isomorphic to the blowup of~$\p^1 \times \p^1$ at four points.
\Cref{classes} combines the results from the previous two sections to define the $S^2$-class of a calligraph
and to show that the product of the two $S^2$-classes in a calligraphic split of a minimally rigid graph
gives the number of realizations of the latter on the sphere up to rotations, thus proving \Cref{theorem:main}.
\Cref{recursion} describes the recursive procedure that we obtain
and reports some proof-of-concept comparison between the timings of the original version of the algorithm from~\cite{Gallet2020}
against the ``enhanced'' version using the recursion of this paper.

\minisec{Acknowledgments.}
The authors would like to thank Paolo Aluffi, Valentina Beorchia, Renzo Cavalieri, Boulos El Hilany, Neithalath Mohan Kumar, Danilo Lewa\'nski, Andrea Ricolfi, and Elaine Wong for useful discussions.
They would like to thank especially Paolo Aluffi for pointing out to us the precise references that led to the proof of \Cref{lemma:commutativity_birational}.

\section{Minimally rigid graphs and calligraphs}
\label{minimally_and_calligraphs}

We start by introducing the concepts of \emph{minimally rigid graph} and of \emph{calligraph}.
The first one is a well-established and studied notion,
while the second one has recently been introduced in~\cite{Grasegger2022}.
Once their edge lengths have been fixed,
minimally rigid graphs can be drawn in finitely many non-congruent ways in the plane,
while calligraphs allow a flex.
We define the notion of realization of a graph in the complex plane (and later on the complex sphere) and of their congruence.
This allows us to speak about the number of realizations of a minimally rigid graph up to isometries.
After defining what we mean by a \emph{split} of a minimally rigid graph into two calligraphs,
we state the main theorem of the paper (\Cref{theorem:main}), which is the theoretical bedrock of the recursive procedure we propose in \Cref{recursion}.
Notice that, in this paper, all considered graphs are \emph{simple}, namely without loops or multiedges.

A \emph{(complex) realization} of a graph $\G = (V, E)$ is a function $\rho \colon V \longrightarrow \C^2$.
Given a choice of \emph{edge lengths} $\lambda = (\lambda_e)_{e \in E}$ with $\lambda_e \in \R_{>0}$ for all $e \in E$,
we say that a realization~$\rho$ \emph{induces}~$\lambda$ if
\[
 \bigl\langle
  \rho(u) - \rho(v),
  \rho(u) - \rho(v)
 \bigr\rangle
 =
 \lambda_e
\]
for all edges $e = \{u,v\}$ in~$E$,
where $\left\langle \cdot, \cdot \right\rangle$ is the quadratic form associated to the identity matrix.
Notice that, although realizations of vertices can have complex coordinates,
we always ask edge lengths to be non-zero \emph{real} numbers.
This is of crucial importance for the development of our results,
because from this assumption it follows that all the objects that we are going to define
are endowed with a real structure.
Two realizations are considered \emph{congruent} up to (complex) isometries
if there exists a matrix $A \in \mathrm{SO}_2(\C)$ and a vector $b \in \C^2$
such that the map $v \mapsto Av + b$ sends one realization to the other.

\begin{definition}
\label{definition:minimally_rigid}
 A graph~$\G = (V, E)$ is called \emph{minimally rigid} if
 it admits only finitely many, but not zero, (complex) realizations in the plane
 for a general choice of edge lengths, up to (complex) isometries.
 This number of realizations is denoted by~$\pcount{\G}$.
\end{definition}

A theorem of Pollaczek-Geiringer \cite{Geiringer1927} and Laman \cite{Laman1970} characterizes minimally rigid graphs in combinatorial terms.

\begin{theorem}[Pollaczek-Geiringer, Laman]
\label{theorem:characterization_minimally}
 A graph~$\G = (V, E)$ is \emph{minimally rigid} if and only if
 \begin{itemize}
  \item $|E| = 2|V| - 3$ and
  \item $|E'| \leq 2|V'| - 3$ for every subgraph $\G' = (V', E')$ with at least two vertices.
 \end{itemize}
\end{theorem}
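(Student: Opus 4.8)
The plan is to recast the analytic definition of minimal rigidity (\Cref{definition:minimally_rigid}) in terms of the \emph{rigidity matrix} and its associated generic rigidity matroid, and then to prove the combinatorial characterization at the level of that matroid. Fix $G = (V,E)$ with $n = |V|$ and consider the rigidity map
\[
 f\colon \C^{2n}\longrightarrow\C^{|E|}, \qquad f(\rho) = \bigl(\langle \rho(u)-\rho(v),\, \rho(u)-\rho(v)\rangle\bigr)_{\{u,v\}\in E}.
\]
It is invariant under the $3$-dimensional group $\mathrm{SO}_2(\C)\ltimes\C^2$ of complex isometries, which acts with $3$-dimensional orbits at a generic realization; hence $\mathrm{d}f_\rho$, which up to rescaling of rows is the classical rigidity matrix $\mathrm{Rig}(G,\rho)$ with rows indexed by $E$, has generic rank $r\le 2n-3$. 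Since $\C^{2n}$ is irreducible, $r$ is well defined, and the subsets of $E$ indexing generically independent families of rows of $\mathrm{Rig}(G,\rho)$ are the independent sets of a matroid on $E$ of rank $r$, the $2$-dimensional generic rigidity matroid. By the theorem on dimension of fibres, for generic $\rho$ the fibre of $f$ through $\rho$ has dimension $2n-r$ and $\overline{\mathrm{im}\,f}$ has dimension $r$. Therefore $G$ has finitely many realizations up to isometry for general edge lengths if and only if $r = 2n-3$ (so that a general fibre of $f$ is a finite union of $3$-dimensional isometry orbits), and that number is nonzero for a \emph{general} choice of edge lengths if and only if $f$ is dominant, i.e.\ $r = |E|$. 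Consequently $G$ is minimally rigid if and only if $E$ is a basis of the generic rigidity matroid, equivalently $E$ is independent and $|E| = 2n-3$ (the rank of the rigidity matroid of $K_n$ being $2n-3$ for $n\ge 2$). It remains to show that $E$ is such a basis precisely when the two displayed conditions hold.

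\emph{Necessity.} If $E$ is a basis then $|E| = 2n-3$ by the rank computation just recalled. For a subgraph $G' = (V',E')$ with $|V'|\ge 2$, the rows of $\mathrm{Rig}(G',\rho)$ form a sub-family of the rows of $\mathrm{Rig}(G,\rho)$ and so are generically independent; since the generic rigidity matroid of $K_{V'}$ has rank $2|V'|-3$, independence forces $|E'|\le 2|V'|-3$.

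\emph{Sufficiency.} This is the substantive direction: assuming $|E| = 2n-3$ and $|E'|\le 2|V'|-3$ for every subgraph with at least two vertices, we must exhibit a realization at which the rows of $\mathrm{Rig}(G,\rho)$ are independent. I would proceed by induction on $n$ through a \emph{Henneberg construction}. The combinatorial step is to show that a graph satisfying the count has a vertex of degree $2$ or $3$, and that an inverse Henneberg move — deleting a degree-$2$ vertex, or deleting a degree-$3$ vertex together with adjoining one edge between two of its former neighbours — can be performed so that the smaller graph still satisfies the count; in the degree-$3$ case the choice of which edge to add is dictated by the submodularity of $V'\mapsto 2|V'|-3$ and needs a short case analysis to avoid creating a subgraph that violates the bound. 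Iterating, $G$ is obtained from a single edge by Henneberg-1 moves (add a new vertex joined to two existing ones) and Henneberg-2 moves (subdivide an edge $uv$ by a new vertex $w$ and join $w$ to a third vertex $x$). The geometric step is that each move preserves generic independence of the rigidity matrix: for Henneberg-1 the two new rows involve the new vertex's coordinates and are independent as soon as its two neighbours lie at distinct points; for Henneberg-2 one specialises $w$ to a generic point of the line through $u$ and $v$ and checks that any self-stress of the enlarged graph must vanish on the edge $wx$ (as $x$ is off that line), hence descends to a nonzero self-stress of the original graph, contradicting the inductive hypothesis. The base case $n = 2$ is immediate.

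The main obstacle is this sufficiency direction: both the combinatorial lemma that Laman graphs admit a Henneberg construction and, above all, the geometric lemma for the Henneberg-2 move — where the specialisation-of-$w$ argument is the crux — require genuine work. An alternative that sidesteps the Henneberg analysis is the matroid-union route: by the tree-packing theorem of Nash--Williams, the condition ``$|E| = 2n-3$ and all subgraph inequalities hold'' is equivalent to ``$E + e$ decomposes into two edge-disjoint spanning trees for every edge $e$'', and a direct assignment of coordinates then produces a realization with independent rigidity rows. Either way, the single hard point is converting the purely combinatorial count into control of the generic rank of $\mathrm{Rig}(G,\rho)$.
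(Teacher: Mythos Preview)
The paper does not prove this theorem at all: it is stated as a classical result and attributed via citation to Pollaczek-Geiringer \cite{Geiringer1927} and Laman \cite{Laman1970}, with no argument given. So there is no ``paper's own proof'' to compare against; the authors simply quote the characterization as background.

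Your proposal is a reasonable outline of the standard proof. The reduction of \Cref{definition:minimally_rigid} to ``$E$ is a basis of the generic rigidity matroid'' via the dimension-of-fibres argument is correct, and the necessity direction is fine. For sufficiency you correctly identify the two classical routes (Henneberg induction, or the Nash--Williams/Crapo tree-decomposition), and you are honest that the Henneberg-2 step is where the real work lies. One small point: in the Henneberg-2 geometric lemma, the argument that a self-stress of the enlarged graph vanishes on the edge $wx$ needs slightly more than ``$x$ is off the line through $u$ and $v$''; the standard way is to observe that the equilibrium condition at $w$ forces the three stress vectors along $wu$, $wv$, $wx$ to sum to zero, and since $wu$, $wv$ are collinear while $wx$ is not, the coefficient on $wx$ must vanish. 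Also, the combinatorial lemma that a Laman graph always admits an inverse Henneberg-2 move (i.e.\ that for a degree-$3$ vertex at least one of the three candidate edges can be added without violating the count) genuinely requires the submodularity argument you allude to and is not entirely trivial. As a proof \emph{sketch} this is fine; as a complete proof it would need those two points fleshed out.
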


We can adapt all notions we introduced so far in order to define realizations of graphs on the (complex) sphere
\[
 S^2_{\C} :=
 \bigl\{
  (x,y,z) \in \C^3
  \, \mid \,
  x^2 + y^2 + z^2 = 1
 \bigr\} \,.
\]
Hence, we can speak of minimally rigid graphs on the sphere and if $\G$ is one of them,
one denotes by~$\scount{\G}$ the number of its (complex) realizations on~$S^2_{\C}$ up to (complex) rotations.
One then finds out that a graph is minimally rigid on the sphere if and only if it is so in the plane;
see \cite{Pogorelov1973, Saliola2007, Izmestiev2009, Eftekhari2018}.

The paper~\cite{Gallet2020} provides a recursive procedure for the computation of~$\scount{\G}$
and the goal of this paper is to set up a technique that provides sometimes very sensible speed ups,
mimicking the one of~\cite{Grasegger2022}.
To do so, we introduce another type of graphs, called \emph{calligraphs}.
These graphs, roughly speaking, admit a one-dimensional set of realizations up to isometries
and are the key players in the recursive procedure.

\begin{definition}
\label{definition:calligraph}
A \emph{calligraph} is a graph~$\calli=(V,E)$ together with a choice of an edge~$\bar{e} = \{ \bar{u}_1, \bar{u}_2 \}$ and of a vertex~$\bar{v}$
such that
\begin{itemize}
 \item $|E| = 2|V| - 4$ and
 \item $(V, E \cup \{ \{\bar{v}, \bar{u}_1\}\})$ is rigid or $(V, E \cup \{ \{\bar{v}, \bar{u}_2\}\})$ is rigid.
\end{itemize}
\end{definition}

\begin{notation}
 Throughout this paper, if $(\calli, \bar{e}, \bar{v})$ is a calligraph,
 we always suppose that $\bar{e} = \{1, 2\}$ and $\bar{v} = 0$.
 Hence, from now on we always refer to a calligraph simply by its underlying graph.
\end{notation}

The key point of the recursive technique that we establish is to split a minimally rigid graph into a union of two calligraphs with a common edge and a further common vertex.

\begin{definition}
\label{definition:calligraphic_split}
 A \emph{split} of a graph~$\G = (V, E)$ is a pair~$(\G_1, \G_2)$ of subgraphs of~$\G$
 such that if we write $\G_i = (V_i, E_i)$, then
 \begin{itemize}
  \item $V = V_1 \cup V_2$ and $E = E_1 \cup E_2$;
  \item $V_1 \cap V_2 = \{0,1,2\}$ and $E_1 \cap E_2 = \{ \{ 1, 2 \} \}$.
 \end{itemize}
 If both $\G_1$ and $\G_2$ are calligraphs, then $(\G_1, \G_2)$ is called a \emph{calligraphic split}.
\end{definition}

As already mentioned in \Cref{introduction}, three among all calligraphs are going to play a special role in our recursion.
\begin{definition}
\label{definition:basic_calligraphs}
 We define three ``basic calligraphs'', namely
 \begin{itemize}
  \item $\lef := \bigl( \{0, 1, 2\}, \{\{1,2\},\{0,1\}\} \bigr)$,
  \item $\righ := \bigl( \{0, 1, 2\}, \{\{1,2\},\{0,2\}\} \bigr)$, and
  \item for $v \in \N$ with $v > 2$, we set $\cente{v} := \bigl( \{0, 1, 2, v\}, \{\{1,2\},\{0,v\},\{1,v\},\{2,v\}\} \bigr)$.
 \end{itemize}
 These are the three graphs from \Cref{figure:calligraphs}.
\end{definition}

We conclude this section by stating the main theorem of the paper.
This result shows that we can attach to each calligraph a triple of integer numbers
so that, once we have a calligraphic split of a minimally rigid graph~$\G$,
the number~$\scount{\G}$ can be computed from the two triples of the split.

\begin{theorem}
\label{theorem:main}
 There exist a function
 \[
  [\,\_\,] \colon \{ \text{calligraphs} \} \longrightarrow \Z^3
 \]
 and a quadratic form
 \[
  \begin{array}{rccc}
   \_ \cdot \_ \colon & \Z^3 \times \Z^3 & \longrightarrow & \Z \\
   & (a_1, b_1, c_1), (a_2, b_2, c_3) & \mapsto & 2 (a_1 a_2  - b_1 b_2 - c_1 c_2)
  \end{array}
 \]
 such that,
 if $\G$ is a minimally rigid graph and $(\G_1, \G_2$) is a calligraphic split of~$\G$,
 then the number of complex realizations of~$\G$ on the sphere up to complex rotations, i.\,e., the quantity~$\scount{\G}$, is $[\G_1] \cdot [\G_2]$. The triple~$[\calli]$ is called the \emph{$S^2$-class} of~$\calli$.

 Moreover, the $S^2$-classes of the three ``basic calligraphs'' from \Cref{definition:basic_calligraphs} are
 \[
  [\lef] = (1,1,0)\,, \quad
  [\righ] = (1,0,1)\,, \quad
  [\cente{v}] = (2,0,0)\,.
 \]
\end{theorem}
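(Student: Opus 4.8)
The plan is to follow the roadmap sketched in the introduction, which reduces the theorem to intersection theory on the configuration space of the ``minimal'' subgraph $\G_{12}$ sitting inside any calligraphic split. First I would invoke the framework of \cite{Gallet2020} recalled in \Cref{configurations}: to a graph~$\G$ on $n$ vertices one associates the configuration variety $\config{\G}{\Lambda} \subseteq \M{2n}$ and its class $\chow{\G}$ in the Chow ring, with $\scount{\G} = \deg \chow{\G}$ when $\G$ is minimally rigid, and $\chow{\G}$ is a product of one divisor class per edge. Given a calligraphic split $(\G_1,\G_2)$ of~$\G$, the vertex sets satisfy $V = V_1 \cup V_2$ with $V_1 \cap V_2 = \{0,1,2\}$ and the edge sets share only $\{1,2\}$, so the three forgetful morphisms $\M{2n} \to \M{2n_i}$ (retaining the markings of the vertices in $V_i$) and $\M{2n_i} \to \M{6}$ (retaining the markings of $\{0,1,2\}$) fit into a commutative diagram, and $\G_{12}$ is the graph $\bigl(\{0,1,2\},\{\{1,2\}\}\bigr)$ whose configuration space is a surface in~$\M{6}$.

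The core computation, carried out in \Cref{reduction}, is to write $\chow{\G}$ as a product of three pullbacks --- one from each $\G_i$ and one from $\G_{12}$ --- and then to push everything forward along the forgetful morphisms down to $\M{6}$. Since all forgetful morphisms here are both proper and flat, the projection formula and flat pullback apply freely, and the degree of $\chow{\G}$ becomes the degree of a product of two divisor classes on the configuration surface $\config{\G_{12}}{\Lambda}$, one class coming from $\G_1$ and one from $\G_2$. By \Cref{description} this surface is isomorphic to the blowup of $\p^1 \times \p^1$ at four points (two pairs of complex conjugate points), whose Picard group is free of rank six: the two rulings $f_1, f_2$ and the four exceptional curves $e_1, \dots, e_4$. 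The real structure on the surface --- which exists precisely because we imposed that edge lengths be real --- swaps the two points in each conjugate pair, hence swaps $e_1 \leftrightarrow e_2$ and $e_3 \leftrightarrow e_4$ and fixes $f_1, f_2$; the classes of the coupler curves are real, so each is determined by a triple, say $(a,b,c)$ with $a$ the common coefficient of $f_1$ (or $f_2$ --- one checks the ruling degrees coincide), $b$ the common coefficient on $\{e_1,e_2\}$, and $c$ the common coefficient on $\{e_3,e_4\}$. Computing the intersection form of the blowup, $f_1 \cdot f_2 = 1$, $f_i^2 = 0$, $e_j^2 = -1$, $e_j \cdot e_k = 0$ for $j \neq k$, $f_i \cdot e_j = 0$, gives for two real classes $(a_1,b_1,c_1)$ and $(a_2,b_2,c_2)$ the pairing $2 a_1 a_2 - b_1 b_2 - c_1 c_2$ after accounting for the doubling coming from each conjugate pair contributing twice --- exactly the stated quadratic form. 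This defines the function $[\,\_\,]$ and establishes $\scount{\G} = [\G_1] \cdot [\G_2]$.

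It remains to compute the three basic $S^2$-classes. For each of $\lef$, $\righ$, and $\cente{v}$ I would exhibit its coupler curve on the blowup surface directly from the defining edges: $\lef$ has the single extra edge $\{0,1\}$, which constrains vertex $0$ to lie on a ``circle'' about vertex $1$, and the proper transform of that circle on the surface has a prescribed class; symmetrically for $\righ$ the extra edge $\{0,2\}$ gives the mirror class; and $\cente{v}$ has the auxiliary vertex $v$ joined to all of $0,1,2$, so tracing vertex $0$ produces a curve of doubled ruling degree passing through none of the exceptional loci. One then reads off $[\lef] = (1,1,0)$, $[\righ] = (1,0,1)$, $[\cente{v}] = (2,0,0)$; as a consistency check, the pairings $[\lef]\cdot[\cente{v}] = 2(1\cdot 2) = 4$, $[\righ]\cdot[\cente{v}] = 4$, $[\lef]\cdot[\righ] = 2(1 - 0 - 0) = 2$ should match the known $\scount{\cdot}$ of the small minimally rigid graphs obtained by these splits (e.g.\ $\scount{} = 4$ for the two ``book'' graphs and $2$ for $K_4$-minus-an-edge with the two pendant edges attached). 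The main obstacle I anticipate is the pushforward bookkeeping in \Cref{reduction}: one must verify that the forgetful morphisms are flat (not merely proper), identify precisely which divisor classes on $\M{2n_i}$ restrict to which classes on the $\G_{12}$-surface, and track the excess-intersection/multiplicity factors so that the ``$2$'' and the signs in the quadratic form come out correctly rather than, say, a $1$ or a wrong sign --- this is where the geometry of the two pairs of complex conjugate points, each contributing a factor of two, has to be handled with care.
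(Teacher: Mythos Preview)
Your overall architecture matches the paper's, but there are two genuine gaps and one factual slip.

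\textbf{The pushforward step is not free.} You write that ``the projection formula and flat pullback apply freely'' to push $\chow{\G}$ down to $\M{6}$. The key identity you need is $(\sigma_2)_{\ast}(\sigma_1)^{\ast} = (\eta_2)^{\ast}(\eta_1)_{\ast}$ for the square of forgetful maps
\[
\begin{tikzcd}
 & \M{2V} \arrow[ld,"\sigma_1"'] \arrow[rd,"\sigma_2"] \\
 \M{2V_1} \arrow[rd,"\eta_1"'] & & \M{2V_2} \arrow[ld,"\eta_2"] \\
 & \M{2V_{12}}
\end{tikzcd}
\]
but this square is \emph{not} Cartesian, so flat base change does not apply directly. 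The paper handles this by showing that the comparison map $\gamma\colon \M{2V}\to \M{2V_1}\times_{\M{2V_{12}}}\M{2V_2}$ is proper birational (\Cref{lemma:proper_birational}) and then invoking a lemma (\Cref{lemma:commutativity_birational}, via Fulton~8.1.1--8.1.2) that $\gamma_{\ast}\sigma_1^{\ast}=\alpha_1^{\ast}$ for the fiber-product projection~$\alpha_1$. Without this, your reduction to an intersection on the surface is unjustified.

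\textbf{The real structure swaps the rulings.} You say it ``fixes $f_1,f_2$'' and then separately ``check'' that the ruling degrees coincide. In fact the real structure on $\config{\G_{12}}{\Lambda_{12}}$ swaps $F_1\leftrightarrow F_2$ as well as the two pairs of exceptional divisors (see the end of \Cref{description}); this is what forces a real class to have the symmetric shape $a(F_1+F_2)-b(E_1+E_2)-c(E_3+E_4)$. Your claimed check is not automatic otherwise. (Also, your displayed pairing $2a_1a_2-b_1b_2-c_1c_2$ is a slip: the correct intersection is $2(a_1a_2-b_1b_2-c_1c_2)$.)

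\textbf{The class $[\cente{v}]$ is not a one-liner.} Your argument that the $\cente{v}$-coupler curve ``passes through none of the exceptional loci'' is exactly what has to be proved, and the paper cannot do it by direct inspection. Instead it first uses the known counts $\scount{\lef\cup\cente{v}}=\scount{\righ\cup\cente{v}}=4$ and $\scount{\cente{v}\cup\cente{w}}=8$ to reduce to the two candidates $(2,0,0)$ and $(6,4,4)$, and then devotes an appendix to excluding $(6,4,4)$ by analyzing which boundary divisors $D_{I,J}$ with $|I|=|J|=3$ can meet $\eta(\redconfig{\cente{v}}{\widetilde{\Lambda}})\cap\config{\G_{12}}{\Lambda_{12}}$. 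Your direct geometric reading would need a comparable argument to be a proof.
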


\begin{remark}
\label{remark:644}
 As explained in \Cref{classes}, one could also choose $(6,4,4)$ for the $S^2$-class of~$\cente{v}$
 and the rest of \Cref{theorem:main} would stay the same.
\end{remark}

\Cref{theorem:main} is the spherical analogue of \cite[Theorem I]{Grasegger2022}.
However, the proof strategy that we employ here is completely different from the one of~\cite{Grasegger2022}.
We believe it would be interesting to have a ``moduli-based'' proof of this result also in the case of the plane,
but this remains an open problem.

\section{Graph sphere configurations in moduli spaces}
\label{configurations}

In this section, we briefly recall the construction of \cite{Gallet2020}
that assigns to each graph~$\G$ with $n$ vertices together with a choice of edge lengths
a subvariety of the moduli space of rational stable curves with $2n$ marked points.
This subvariety is called the \emph{configuration space} of the graph
and it is defined in such a way that
an open subset of it is in bijection with the congruence classes of realizations of~$\G$ on the sphere inducing the given edge lengths up to (complex) isometries.
The idea is that:
\begin{itemize}
 \item
 the projective closure of the complex sphere~$S^2_{\C}$ intersects the plane at infinity in a smooth conic, called the \emph{absolute conic};
 \item
 through each point on the complex sphere~$S^2_{\C}$ pass two lines, each of which intersects the absolute conic in a point.
\end{itemize}
Hence, to a realization of $n$ points on the sphere, we can associate $2n$ points on the absolute conic.
Then, one notices that two realizations of $n$ points are (complex) rotation-congruent if and only if
the corresponding two $2n$ tuples of points on the absolute conic differ by a projective automorphism.
Thus, realizations of $n$ vertices up to (complex) rotations can be encoded as elements of the moduli space of rational stable curves with $2n$ markings, which is usually denoted by~$\M{2n}$.
Moreover, the cosine of the angle between two points on the sphere can be expressed
as the cross-ratio between the four corresponding points on the absolute conic.
For the precise details, we refer to \cite{Gallet2020} and the references therein.

Notice that the real projective automorphisms of $S^2_{\C}$ that preserve the
absolute conic correspond to exactly rotations of $S^2_{\R}$.
Moreover, the real structure of~$S^2_{\C}$ given by componentwise complex conjugation
swaps the two lines passing through each point of~$S^2_{\R}$.
This induces a real structure on the moduli space of rational stable curves with $2n$ markings that swaps two $n$-tuples of markings.

\begin{definition}
\label{definition:moduli_vertices}
 For a set $V$, we write $\M{2V}$ to denote the moduli space~$\M{2|V|}$ of rational stable curves with $2|V|$ marked points,
 where the marked points are indicated by $\{ P_v \}_{v \in V} \cup \{ Q_v \}_{v \in V}$.
 The moduli space~$\M{2V}$ is a smooth projective variety.
\end{definition}

With this notation, the real structure on $\M{2V}$ described before swaps the $P$-marked points with the $Q$-marked points.

To encode the computation of the cross-ratio in the framework of moduli spaces,
we introduce a particular type of morphisms.

\begin{definition}
\label{definition:forgetful_morphisms}
 Let $\G = (V, E)$ be a graph and let $e \in E$ with $e = \{ v, w \}$.
 We define $\pi_e$ to be the \emph{edge forgetful morphism}
 \[
  \pi_e \colon \M{2V} \longrightarrow \M{2e}
 \]
 that forgets all the marked points except for $P_v$, $Q_v$, $P_w$, and $Q_w$.
\end{definition}

By unraveling the definitions,
one sees that the edge forgetful morphisms compute the cross-ratio of the four points that are not forgotten,
when we identify~$\M{2e}$ with~$\p^1$.
Hence, in our perspective, fixing the value of an edge forgetful morphism
amounts to fixing the spherical length of an edge.
Therefore, if we are given an assignment of edge lengths $\lambda = (\lambda_e)_{e \in E}$ for a graph $\G = (V, E)$,
then we can think of it as an element $\Lambda \in \prod_{e \in E} \M{2e}$
and we can look after all elements in the moduli space~$\M{2V}$ that are mapped to~$\Lambda$ by the edge forgetful morphisms.
These elements constitute the configurations of the graph,
namely a compactification of the set of realizations up to rotations.

\begin{definition}
\label{definition:configuration_space}
 Let $\G = (V,E)$ be a graph and let $\Lambda \in \prod_{e \in E} \M{2e}$.
 The \emph{configuration space}~$\config{\G}{\Lambda}$ of~$\G$ with labeling~$\Lambda$ is defined as
 \[
  \config{\G}{\Lambda} := \Phi_{\G}^{-1}(\Lambda) \,,
 \]
 where
 \[
  \Phi_\G \colon \M{2V} \longrightarrow \prod_{e \in E} \M{2e}
 \]
 is the product $\prod_{e \in E} \pi_e$.
 If the element~$\Lambda$ comes from a real assignment of edge lengths,
 then the variety~$\config{\G}{\Lambda}$ is real with respect to the real structure
 that swaps the $P$-marked points and the $Q$-marked points.
 We denote by $\chow{\G}^{\Lambda}$ the rational equivalence class of~$\config{\G}{\Lambda}$ in the Chow ring of~$\M{2V}$.
 Since this class does not depend on~$\Lambda$, from now on we simply write $\chow{\G}$.
\end{definition}

For reasons that are made clear in \Cref{reduction},
once we are given a split of a graph,
it is convenient to remove the constraint given by the edge common to the two calligraphs
and define two new configuration spaces.

\begin{definition}
\label{definition:reduced_configuration_space}
 Let $\G = (V, E)$ and let $(\G_1, \G_2)$ be a split of~$\G$.
 Let $\Lambda \in \prod_{e \in E} \M{2e}$ be an edge labeling for~$\G$.
 For $i \in \{1, 2\}$, we set
 \[
  \widetilde{E}_i := E_i \setminus \{ \{1,2\} \} \quad \text{and} \quad \widetilde{\Lambda}_i := (\Lambda_e)_{e \in \widetilde{E}_i} \,.
 \]
 Moreover, we define
 \[
  \widetilde{\Phi}_{\G_i} \colon \M{2V} \longrightarrow \prod_{e \in \widetilde{E}_i} \M{2e}
 \]
 as the product $\prod_{e \in \widetilde{E}_i} \pi_e$.
 We define the \emph{reduced configuration space}~$\redconfig{\G_i}{\widetilde{\Lambda}_i}$ of~$\G_i$ to be
 \[
  \redconfig{\G_i}{\widetilde{\Lambda}_i} := \widetilde{\Phi}_{\G_i}^{-1}(\widetilde{\Lambda}_i) \,.
 \]
 As in \Cref{definition:configuration_space}, if $\widetilde{\Lambda}_i$ comes from a real edge length assignment,
 then $\redconfig{\G_i}{\widetilde{\Lambda}_i}$ is a real subvariety of~$\M{2V}$.
 We denote by $\redchow{\G_i}$ the rational equivalence class of~$\redconfig{\G_i}{\widetilde{\Lambda}_i}$ in the Chow ring,
 which does not depend on $\widetilde{\Lambda}_i$ either.
\end{definition}

When a graph is minimally rigid, its configuration space for a general choice of edge lengths is a finite set,
whose cardinality equals the number of realizations of the graph on the sphere up to rotations.
Therefore, this number is equal to the degree of the class corresponding to the configuration space of the graph,
as stated in \Cref{proposition:count_degree}.

\begin{proposition}[\cite{Gallet2020}]
\label{proposition:count_degree}
 Let $\G = (V, E)$ be a minimally rigid graph. Then
 \[
  \scount{\G} = \int_{\M{2V}} K_\G \,.
 \]
 Here and in the following, $\displaystyle \int_X D$ indicates the \emph{degree} of a class $D$ in the Chow ring of a complete variety~$X$ over $\C$,
 which is the number $s_{\ast}(D)$, where $s \colon X \longrightarrow \mathrm{Spec}(\C)$ is the structural morphism of~$X$
 and one identifies the Chow group of classes of dimension zero in~$\mathrm{Spec}(\C)$ with $\Z$; see \cite[Definition~1.4]{Fulton1998}.
 When $D$ is the class of a finite reduced scheme, its degree is the number of closed points of that scheme.
\end{proposition}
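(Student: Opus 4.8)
The plan is to unwind the definition of $\config{\G}{\Lambda}$ and to reduce the statement to the standard fact that the degree of the class of a zero-dimensional reduced subscheme counts its points. First I would observe that, since $\G$ is minimally rigid, for a general choice of edge lengths $\lambda = (\lambda_e)_{e\in E}$ — encoded as $\Lambda \in \prod_{e\in E}\M{2e}$ — the set of realizations of $\G$ on $S^2_{\C}$ inducing $\lambda$, taken up to complex rotations, is finite and nonempty. By the construction recalled in \Cref{configurations}, the congruence classes of such realizations are in bijection with an open subset of $\config{\G}{\Lambda} = \Phi_{\G}^{-1}(\Lambda)$. The key point is that, for $\Lambda$ general, this open subset is all of $\config{\G}{\Lambda}$: there are no ``boundary'' configurations supported on the locus of strictly nodal stable curves, because such degenerate configurations would have to satisfy extra constraints and hence cannot occur for a general fibre. (One can make this precise by a dimension count on the boundary strata of $\M{2V}$ and on their images under $\Phi_{\G}$, or by invoking the corresponding statement already established in~\cite{Gallet2020}.) Moreover, generic smoothness of $\Phi_{\G}$ — applicable in characteristic zero by generic smoothness for dominant morphisms of varieties over $\C$ — ensures that $\config{\G}{\Lambda}$ is reduced for $\Lambda$ in a dense open subset of the target. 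Thus $\config{\G}{\Lambda}$ is a reduced finite scheme whose number of closed points equals $\scount{\G}$.

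Next I would pass to the Chow ring. Since $\config{\G}{\Lambda}$ is a finite reduced subscheme of the smooth projective variety $\M{2V}$, its class $\chow{\G}^{\Lambda}$ lies in the Chow group $A_0(\M{2V})$ of zero-cycles, and equals the sum of the classes of its (reduced) points. Applying the structural morphism $s\colon \M{2V}\longrightarrow\mathrm{Spec}(\C)$ and using $s_{\ast}[\mathrm{pt}] = 1$ under the identification $A_0(\mathrm{Spec}\,\C)\cong\Z$, we get
\[
 \int_{\M{2V}} \chow{\G}^{\Lambda} \;=\; \deg\bigl(\config{\G}{\Lambda}\bigr) \;=\; \#\bigl\{\text{closed points of }\config{\G}{\Lambda}\bigr\} \;=\; \scount{\G}\,,
\]
which is exactly the last displayed equality of the proposition. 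Finally, I would recall (from \Cref{definition:configuration_space}) that the rational equivalence class $\chow{\G}^{\Lambda}$ is independent of $\Lambda$ — this follows because the $\config{\G}{\Lambda}$ for varying $\Lambda$ fit into a flat family over a dense open subset of $\prod_{e\in E}\M{2e}$, and fibres of a flat family are rationally equivalent — so we may write $K_\G$ unambiguously and conclude $\scount{\G} = \int_{\M{2V}} K_\G$.

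The main obstacle I anticipate is the first step: verifying rigorously that for a \emph{general} edge labeling $\Lambda$ the scheme $\config{\G}{\Lambda}$ is simultaneously (i) finite, (ii) entirely contained in the open locus where configurations correspond to honest rotation-classes of sphere realizations (no contribution from the boundary of $\M{2V}$), and (iii) reduced. Points (i) and (iii) are handled by generic smoothness and a count of the relative dimension of $\Phi_{\G}$ — here minimal rigidity, i.e. $|E| = 2|V|-3$, is used to match the $2|V|-3$-dimensional source (after quotienting by the $3$-dimensional automorphism group encoded in the moduli space) with the $|E|$-dimensional target — while (ii) requires controlling the behaviour of $\Phi_{\G}$ on the boundary strata. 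All three are established in~\cite{Gallet2020}, so in the write-up I would cite that work for these facts and keep the present proof focused on the clean Chow-theoretic conclusion; alternatively, one could give a self-contained boundary-stratum dimension estimate, but that would lengthen the argument without adding conceptual content.
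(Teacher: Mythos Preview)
The paper does not supply a proof of this proposition: it is stated with a citation to~\cite{Gallet2020} and no argument is given. Your proposal correctly identifies the essential ingredients---finiteness, reducedness, and absence of boundary configurations for a general fibre of~$\Phi_{\G}$, followed by the tautology that the degree of a reduced zero-cycle counts its points---and you rightly defer the substantive verifications (especially the boundary analysis) to~\cite{Gallet2020}; this is exactly the status the proposition has in the paper, so your write-up is appropriate and there is nothing further to compare.
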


\section{Reduction to three classes in \texorpdfstring{$\M{6}$}{M06}}
\label{reduction}

The goal of this section is to show that it is possible to express the degree of $\chow{\G}$
as an intersection of three contributions in the Chow ring of~$\M{6}$ (see \Cref{proposition:reduction}).
From now on, in this section we fix the following notation:
\begin{itemize}
 \item $\G = (V, E)$ is a graph;
 \item $(\G_1, \G_2)$ is a split of~$\G$ and $\G_i = (V_i, E_i)$ for $i \in \{1,2\}$;
 \item $\G_{12}$ is the graph with vertices $V_{12} := \{ 0, 1, 2 \}$ and edges $E_{12} := \{ \{1, 2\} \}$.
\end{itemize}
From this setting, we get the following commutative diagram, where all the maps are forgetful morphisms
(hence, in particular, all maps are flat and proper):
\begin{equation}
\label{equation:main_diagram}
 \begin{tikzcd}
 & \M{2V} \arrow[ld, "\sigma_1"'] \arrow[rd, "\sigma_2"] \arrow[dd, "\vartheta"] \\
 \M{2V_1} \arrow[rd, "\eta_1"'] & & \M{2V_2} \arrow[ld, "\eta_2"] \\
 & \M{2V_{12}}
 \end{tikzcd}
\end{equation}
First of all, we write $\chow{\G}$ as the product of three contributions in~$\M{2V}$,
two of which are the pullbacks of the classes of the reduced configuration spaces from \Cref{definition:reduced_configuration_space}.

\begin{lemma}
\label{lemma:inspection}
 $\chow{\G} = \vartheta^{\ast}(\chow{\G_{12}}) \cdot \sigma_1^{\ast}(\redchow{\G_1}) \cdot \sigma_2^{\ast}(\redchow{\G_2})$ \,.
\end{lemma}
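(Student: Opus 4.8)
The statement is an identity of classes in the Chow ring of $\M{2V}$, so the natural approach is to interpret everything scheme-theoretically and then pass to rational equivalence classes. First I would unwind the definitions: $\config{\G}{\Lambda} = \Phi_\G^{-1}(\Lambda)$ where $\Phi_\G = \prod_{e \in E} \pi_e$, and I would split the product over $E = E_1 \cup E_2$ according to the split. Since $E_1 \cap E_2 = \{\{1,2\}\}$ and $V_1 \cap V_2 = \{0,1,2\} = V_{12}$, the map $\Phi_\G$ factors as a fiber product of $\widetilde{\Phi}_{\G_1}$, $\widetilde{\Phi}_{\G_2}$, and $\pi_{\{1,2\}}$. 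Concretely, I would show the scheme-theoretic equality
\[
 \config{\G}{\Lambda}
 =
 \vartheta^{-1}\bigl(\config{\G_{12}}{\Lambda_{\{1,2\}}}\bigr)
 \cap
 \sigma_1^{-1}\bigl(\redconfig{\G_1}{\widetilde{\Lambda}_1}\bigr)
 \cap
 \sigma_2^{-1}\bigl(\redconfig{\G_2}{\widetilde{\Lambda}_2}\bigr)\,,
\]
using that $\pi_{\{1,2\}}$ on $\M{2V}$ coincides with $\vartheta$ followed by the edge forgetful morphism on $\M{2V_{12}}$, and that $\widetilde{\Phi}_{\G_i}$ on $\M{2V}$ factors through $\sigma_i$ followed by the corresponding map on $\M{2V_i}$ (this last point is exactly the compatibility of forgetful morphisms with respect to forgetting disjoint sets of marked points, which holds because $V_1$ and $V_2$ only overlap in $V_{12}$ and the edges of $\widetilde{E}_i$ involve only vertices of $V_i$).

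Once this equality of subschemes is established, the next step is to promote it to the Chow ring. Since all three morphisms $\vartheta$, $\sigma_1$, $\sigma_2$ are flat, pullback along them is well-defined on cycles and sends $\chow{\G_{12}}$, $\redchow{\G_1}$, $\redchow{\G_2}$ to the classes of the respective preimages; this is where I invoke flatness from the diagram~\eqref{equation:main_diagram}. The intersection of the three preimages then has class equal to the product $\vartheta^{\ast}(\chow{\G_{12}}) \cdot \sigma_1^{\ast}(\redchow{\G_1}) \cdot \sigma_2^{\ast}(\redchow{\G_2})$ — provided the intersection is of the expected codimension, i.e. the three preimages meet properly, so that the intersection product in the Chow ring really computes the class of the set-theoretic (indeed scheme-theoretic) intersection. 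The codimensions add up: $\widetilde{E}_i$ contributes $|\widetilde{E}_i|$ conditions, the common edge one more, and $|\widetilde{E}_1| + |\widetilde{E}_2| + 1 = |E|$, which is the codimension of $\config{\G}{\Lambda}$. Since the classes $\chow{\G}$, $\redchow{\G_i}$, $\chow{\G_{12}}$ are independent of the labeling, it suffices to check properness for one sufficiently general choice of $\Lambda$, or alternatively to note that for a general $\Lambda$ the fibers all have the expected dimension because the $\Phi$'s are flat (or dominant with irreducible generic fiber) onto their images.

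**Main obstacle.** The delicate point is not the set-theoretic identity but the assertion that the three pulled-back classes multiply to give exactly $\chow{\G}$, with the correct multiplicities and no excess-intersection correction. The cleanest way around this is to argue entirely at the level of flat pullbacks of a single well-chosen $\Lambda$: because each $\pi_e$ is flat, $\Phi_\G^{-1}(\Lambda)$ is cut out by pulling back the class of the point $\Lambda \in \prod_e \M{2e}$, and this class factors through the product of the three projections to $\M{2V_{12}}$, $\prod_{e \in \widetilde{E}_1} \M{2e}$, $\prod_{e \in \widetilde{E}_2} \M{2e}$; flat pullback is a ring homomorphism, so the product of the pulled-back point-classes equals the pulled-back class of $\Lambda$, which is $\chow{\G}$. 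Reassembling the three factors as $\vartheta^{\ast}(\chow{\G_{12}})$, $\sigma_1^{\ast}(\redchow{\G_1})$, $\sigma_2^{\ast}(\redchow{\G_2})$ then uses only that $\vartheta$, $\sigma_1$, $\sigma_2$ are themselves flat and that forgetful morphisms compose in the evident way, i.e. the commutativity of~\eqref{equation:main_diagram} together with the factorizations $\pi_e = (\text{edge map on }\M{2V_i}) \circ \sigma_i$ for $e \in \widetilde{E}_i$ and $\pi_{\{1,2\}} = (\text{edge map on }\M{2V_{12}}) \circ \vartheta$. I expect the bookkeeping of which marked points each forgetful morphism retains — and verifying that these factorizations are literally equalities of morphisms, not just up to the relevant symmetries — to be the part that needs the most care in writing up.
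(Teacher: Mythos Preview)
Your proposal is correct, and the ``cleanest way'' you arrive at in the final paragraph is exactly what the paper does. The paper simply cites the explicit formula from~\cite{Gallet2020},
\[
 \chow{\G} = \prod_{\substack{e \in E \\ e = \{v,w\}}} \; \sum_{\substack{(I,J):\\ P_v,Q_v \in I\\ P_w,Q_w \in J}} D_{I,J}\,,
\]
observes that each factor is the pullback of the corresponding edge-class under the relevant forgetful morphism, and splits the product over $E = E_{12} \sqcup \widetilde{E}_1 \sqcup \widetilde{E}_2$.

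Your initial plan --- proving a scheme-theoretic intersection identity and then worrying about proper intersection and excess-intersection corrections --- is a detour you need not take. Once you know (as the paper takes from~\cite{Gallet2020}, and as your own flat-pullback argument also yields) that $\chow{\G}$ is already a product of edge-by-edge classes pulled back along the $\pi_e$'s, the factorization $\pi_e = (\text{edge map on } \M{2V_i}) \circ \sigma_i$ for $e \in \widetilde{E}_i$ and $\pi_{\{1,2\}} = (\text{edge map on } \M{2V_{12}}) \circ \vartheta$, together with functoriality of flat pullback, gives the lemma in one line. No genericity of~$\Lambda$ or dimension count is needed, because the identity holds at the level of Chow classes from the start, not merely as a statement about a particular fiber.
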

\begin{proof}
The statement follows from inspecting the description of~$\chow{\G}$ provided in~\cite{Gallet2020}, which we now recall.

We can write $\chow{\G}$ as a product of the classes of particular divisors in~$\M{2V}$, denoted by~$D_{I, J}$, where $(I, J)$ is a partition of~$2V$:
 \[
  \chow{\G} = \prod_{\substack{e \in E \\ e = \{ v, w \}}} \sum_{\substack{(I,J) \text{ s.t.} \\ P_v, Q_v \in I \\ P_w, Q_w \in J}} D_{I,J} \,.
 \]
 The precise description of the classes $D_{I, J}$ is not needed in this paper;
 for the interested reader, we refer to~\cite{Gallet2020} and the references therein.

 With this description at hand, we decompose $\chow{G}$ into three contributions:
 \begin{multline*}
  \chow{\G} = \prod_{\substack{e \in E \\ e = \{ v, w \}}} \sum_{\substack{(I,J) \text{ s.t.} \\ P_v, Q_v \in I \\ P_w, Q_w \in J}} D_{I,J} = \\
  \underbrace{\prod_{\substack{e \in E_{12} \\ e = \{ v, w \}}} \sum_{\substack{(I,J) \text{ s.t.} \\ P_v, Q_v \in I \\ P_w, Q_w \in J}} D_{I,J}}_{\vartheta^{\ast}(\chow{\G_{12}})} \, \cdot \,
  \underbrace{\prod_{\substack{e \in \widetilde{E}_1 \\ e = \{ v, w \}}} \sum_{\substack{(I,J) \text{ s.t.} \\ P_v, Q_v \in I \\ P_w, Q_w \in J}} D_{I,J}}_{\sigma_1^{\ast}(\redchow{\G_1})} \, \cdot \,
  \underbrace{\prod_{\substack{e \in \widetilde{E}_2 \\ e = \{ v, w \}}} \sum_{\substack{(I,J) \text{ s.t.} \\ P_v, Q_v \in I \\ P_w, Q_w \in J}} D_{I,J}}_{\sigma_2^{\ast}(\redchow{\G_2})}
 \end{multline*}
 where the $\widetilde{E}_i$'s are as in \Cref{definition:reduced_configuration_space}.
\end{proof}

The main result that we prove in this section is the following.

\begin{proposition}
\label{proposition:reduction}
 $\displaystyle \int_{\M{2V}} K_\G = \int_{\M{2V_{12}}} \chow{\G_{12}} \cdot (\eta_1)_{\ast} (\redchow{\G_1}) \cdot (\eta_2)_{\ast} (\redchow{\G_2})$\,.
\end{proposition}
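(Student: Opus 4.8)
The plan is to combine \Cref{lemma:inspection} with the projection formula applied twice, using the commutative diagram~\eqref{equation:main_diagram}. Starting from $\int_{\M{2V}} \chow{\G}$, I would substitute the factorization $\chow{\G} = \vartheta^{\ast}(\chow{\G_{12}}) \cdot \sigma_1^{\ast}(\redchow{\G_1}) \cdot \sigma_2^{\ast}(\redchow{\G_2})$ from \Cref{lemma:inspection}. The degree is invariant under pushforward along the structural morphism, so I can compute the degree after pushing forward along $\vartheta \colon \M{2V} \longrightarrow \M{2V_{12}}$, since $\vartheta$ is proper: concretely, $\int_{\M{2V}} \alpha = \int_{\M{2V_{12}}} \vartheta_{\ast}(\alpha)$ for any class $\alpha$. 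Thus the task reduces to computing $\vartheta_{\ast}\bigl(\vartheta^{\ast}(\chow{\G_{12}}) \cdot \sigma_1^{\ast}(\redchow{\G_1}) \cdot \sigma_2^{\ast}(\redchow{\G_2})\bigr)$.

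Next I would peel off the $\vartheta^{\ast}(\chow{\G_{12}})$ factor by the projection formula: $\vartheta_{\ast}\bigl(\vartheta^{\ast}(\chow{\G_{12}}) \cdot \beta\bigr) = \chow{\G_{12}} \cdot \vartheta_{\ast}(\beta)$, where $\beta = \sigma_1^{\ast}(\redchow{\G_1}) \cdot \sigma_2^{\ast}(\redchow{\G_2})$. So the remaining core computation is to show
\[
 \vartheta_{\ast}\bigl(\sigma_1^{\ast}(\redchow{\G_1}) \cdot \sigma_2^{\ast}(\redchow{\G_2})\bigr) = (\eta_1)_{\ast}(\redchow{\G_1}) \cdot (\eta_2)_{\ast}(\redchow{\G_2}) \,.
\]
This is where the real content lies. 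The idea is that the two forgetful morphisms $\sigma_1$ and $\sigma_2$ forget disjoint sets of marked points (those of $V \setminus V_1$ and those of $V \setminus V_2$, respectively, which together with $V_{12}$ partition $V$), so the square formed by $\sigma_1$, $\sigma_2$, $\eta_1$, $\eta_2$ in~\eqref{equation:main_diagram} should be a fiber square, and $\vartheta$ factors as $\eta_1 \circ \sigma_1 = \eta_2 \circ \sigma_2$. One then wants a base-change/Künneth-type identity: pulling back $\redchow{\G_1}$ along $\sigma_1$ and $\redchow{\G_2}$ along $\sigma_2$, multiplying, and pushing forward along $\vartheta$ should give the product of the individual pushforwards $(\eta_i)_{\ast}(\redchow{\G_i})$ in $\M{2V_{12}}$. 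This is the standard fact that for a Cartesian square of proper flat morphisms, $\vartheta_{\ast}(\sigma_1^{\ast}(\xi_1) \cdot \sigma_2^{\ast}(\xi_2)) = (\eta_1)_{\ast}(\xi_1) \cdot (\eta_2)_{\ast}(\xi_2)$, obtainable by applying flat base change and the projection formula to each factor in turn, possibly after passing to the intermediate moduli spaces.

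The main obstacle I anticipate is verifying precisely that the relevant square is Cartesian and that $\sigma_1$, $\sigma_2$ satisfy the hypotheses (flatness, properness, and the Tor-independence or transversality needed for the base-change formula on Chow groups) so that the push-pull identity is legitimate — forgetful morphisms between moduli of stable curves are flat and proper, but one must check the fiber-product description of $\M{2V}$ over $\M{2V_{12}}$ is literally $\M{2V_1} \times_{\M{2V_{12}}} \M{2V_2}$, which requires that forgetting the $V\setminus V_1$ points and the $V \setminus V_2$ points be "independent" operations. If that identification fails on the nose (e.g. the map to the fiber product is only birational, not an isomorphism), one would need the more delicate argument alluded to in the acknowledgments around \Cref{lemma:commutativity_birational}, handling the discrepancy via a birational correction that does not affect degrees. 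Once the push-pull identity is in hand, reassembling the three steps gives exactly the claimed formula $\int_{\M{2V}} K_\G = \int_{\M{2V_{12}}} \chow{\G_{12}} \cdot (\eta_1)_{\ast}(\redchow{\G_1}) \cdot (\eta_2)_{\ast}(\redchow{\G_2})$.
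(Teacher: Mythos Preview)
Your proposal is correct and follows essentially the same route as the paper: factor $\chow{\G}$ via \Cref{lemma:inspection}, apply the projection formula along~$\vartheta$ to peel off $\chow{\G_{12}}$, and then reduce to the identity $\vartheta_{\ast}\bigl(\sigma_1^{\ast}(\redchow{\G_1}) \cdot \sigma_2^{\ast}(\redchow{\G_2})\bigr) = (\eta_1)_{\ast}(\redchow{\G_1}) \cdot (\eta_2)_{\ast}(\redchow{\G_2})$. You also correctly anticipate the one real difficulty --- the square in~\eqref{equation:main_diagram} is \emph{not} literally Cartesian but only birational to the fiber product --- and point to \Cref{lemma:commutativity_birational} as the fix; this is exactly how the paper proceeds (via \Cref{lemma:proper_birational} and \Cref{lemma:commutativity_birational}) to obtain the base-change identity $(\sigma_2)_{\ast}(\sigma_1)^{\ast} = (\eta_2)^{\ast}(\eta_1)_{\ast}$.
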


\Cref{proposition:reduction} results immediately from the following, stronger, result.

\begin{proposition}
\label{proposition:reduction_stronger}
 $\vartheta_{\ast} (K_\G) =  \chow{\G_{12}} \cdot (\eta_1)_{\ast} (\redchow{\G_1}) \cdot (\eta_2)_{\ast} (\redchow{\G_2})$ \,.
\end{proposition}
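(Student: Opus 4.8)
The plan is to start from the factorization of $\chow{\G}$ provided by \Cref{lemma:inspection}, namely
$\chow{\G} = \vartheta^{\ast}(\chow{\G_{12}}) \cdot \sigma_1^{\ast}(\redchow{\G_1}) \cdot \sigma_2^{\ast}(\redchow{\G_2})$,
and to push this forward along $\vartheta$ using the projection formula together with a base-change identity for the fiber square sitting inside diagram~\eqref{equation:main_diagram}. The first move is to apply the projection formula for the proper morphism $\vartheta$: since $\vartheta^{\ast}(\chow{\G_{12}})$ is pulled back from the base, it comes out of the pushforward, giving
$\vartheta_{\ast}(\chow{\G}) = \chow{\G_{12}} \cdot \vartheta_{\ast}\bigl( \sigma_1^{\ast}(\redchow{\G_1}) \cdot \sigma_2^{\ast}(\redchow{\G_2}) \bigr)$.
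So the real content is to show
$\vartheta_{\ast}\bigl( \sigma_1^{\ast}(\redchow{\G_1}) \cdot \sigma_2^{\ast}(\redchow{\G_2}) \bigr) = (\eta_1)_{\ast}(\redchow{\G_1}) \cdot (\eta_2)_{\ast}(\redchow{\G_2})$.

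The key geometric input is that $\M{2V}$ should be identified, via $(\sigma_1, \sigma_2)$, with the fiber product $\M{2V_1} \times_{\M{2V_{12}}} \M{2V_2}$ — this is the standard behaviour of forgetful morphisms of moduli of stable marked curves, where forgetting two disjoint sets of marked points is the same as forgetting them one batch at a time and the resulting square is Cartesian; one has to check that the natural map $\M{2V} \to \M{2V_1} \times_{\M{2V_{12}}} \M{2V_2}$ is an isomorphism (both sides are smooth and the map is bijective on points by a direct moduli-theoretic argument, or one invokes the known fibered structure of $\M{0,n}$). Granting this, and using that $\eta_1, \eta_2$ are flat and proper, the flat base change / refined projection formula for the Cartesian square gives
$\vartheta_{\ast}\bigl( \sigma_1^{\ast}(\alpha_1) \cdot \sigma_2^{\ast}(\alpha_2) \bigr) = (\eta_1)_{\ast}(\alpha_1) \cdot (\eta_2)_{\ast}(\alpha_2)$
for classes $\alpha_i$ on $\M{2V_i}$: concretely, $\sigma_2^{\ast}(\alpha_2) = \sigma_2^{\ast} \eta_2^{\ast}(\text{something})$ is not literally true, so instead one argues $\vartheta_{\ast} \sigma_2^{\ast} = \eta_2^{\ast} (\eta_1)_{\ast} \sigma_1_{\ast}^{-1}\cdots$ — more cleanly, apply the projection formula along $\sigma_1$ first: write $\sigma_2^{\ast}(\redchow{\G_2})$, push forward along $\sigma_1$ using base change for the square $\{\sigma_1, \vartheta, \eta_1, \sigma_2\}$ to get $(\sigma_1)_{\ast}\sigma_2^{\ast}(\redchow{\G_2}) = \eta_1^{\ast}(\eta_2)_{\ast}(\redchow{\G_2})$, then $\vartheta_{\ast}(\sigma_1^{\ast}(\redchow{\G_1})\cdot \sigma_2^{\ast}(\redchow{\G_2})) = (\eta_1)_{\ast}\bigl(\redchow{\G_1}\cdot \eta_1^{\ast}(\eta_2)_{\ast}(\redchow{\G_2})\bigr) = (\eta_1)_{\ast}(\redchow{\G_1}) \cdot (\eta_2)_{\ast}(\redchow{\G_2})$, using the projection formula twice and $\vartheta = \eta_1 \circ \sigma_1$.

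I expect the main obstacle to be justifying the base-change step rigorously: one needs that the square with corners $\M{2V}, \M{2V_1}, \M{2V_2}, \M{2V_{12}}$ is genuinely Cartesian (not merely commutative), so that flat base change for Chow groups applies — this is presumably the content of the ``\Cref{lemma:commutativity_birational}'' acknowledged in the paper, and it may require checking that the two sides of the comparison morphism agree as schemes, which is where the fibered structure of moduli of rational stable curves with marked points enters. Everything else — the projection formula, flatness and properness of forgetful morphisms (already asserted after diagram~\eqref{equation:main_diagram}), and the reduction of \Cref{proposition:reduction} to \Cref{proposition:reduction_stronger} by taking degrees and using that $\vartheta$ is proper so $\int_{\M{2V}} = \int_{\M{2V_{12}}} \circ\, \vartheta_{\ast}$ — is formal.
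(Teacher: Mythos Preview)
Your overall strategy matches the paper's exactly: apply the projection formula along~$\vartheta$ to extract~$\chow{\G_{12}}$, reduce to the identity
\[
 \vartheta_{\ast}\bigl( \sigma_1^{\ast}(\redchow{\G_1}) \cdot \sigma_2^{\ast}(\redchow{\G_2}) \bigr)
 =
 (\eta_1)_{\ast}(\redchow{\G_1}) \cdot (\eta_2)_{\ast}(\redchow{\G_2}) \,,
\]
and derive this from a base-change statement of the form $(\sigma_1)_{\ast}\sigma_2^{\ast} = \eta_1^{\ast}(\eta_2)_{\ast}$ via two applications of the projection formula. That derivation is correct and is precisely what the paper does.

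The genuine gap is your assertion that the square in diagram~\eqref{equation:main_diagram} is Cartesian. It is not: the comparison morphism $\gamma \colon \M{2V} \to \M{2V_1} \times_{\M{2V_{12}}} \M{2V_2}$ is \emph{not} an isomorphism in general. (Concretely, stabilization after forgetting the marked points indexed by $V \setminus V_1$ and by $V \setminus V_2$ contracts different components of a reducible stable curve, so a pair in the fiber product need not arise from a single stable curve in~$\M{2V}$; equivalently, the fiber product is typically singular while $\M{2V}$ is smooth.) The paper states this explicitly. Hence flat base change for a Cartesian square cannot be invoked directly, and your ``key geometric input'' fails as stated.

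The paper's fix is exactly what \Cref{lemma:commutativity_birational} provides, but not in the way you guessed. One first shows (\Cref{lemma:proper_birational}) that $\gamma$ is proper and \emph{birational}. Then \Cref{lemma:commutativity_birational}, a consequence of \cite[Propositions~8.1.1(c) and~8.1.2(a)]{Fulton1998}, says that if $f$ is proper birational and $g$, $h = g \circ f$ are flat with nonsingular target, then $f_{\ast} h^{\ast} = g^{\ast}$. Applying this with $f = \gamma$, $g = \alpha_1$, $h = \sigma_1$ gives $\gamma_{\ast}\sigma_1^{\ast} = \alpha_1^{\ast}$; combined with ordinary flat base change on the actual fiber square (with the $\alpha_i$'s), this yields $(\sigma_2)_{\ast}\sigma_1^{\ast} = (\alpha_2)_{\ast}\gamma_{\ast}\sigma_1^{\ast} = (\alpha_2)_{\ast}\alpha_1^{\ast} = \eta_2^{\ast}(\eta_1)_{\ast}$. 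So the missing idea is not verifying Cartesianness but rather bypassing its failure via a proper-birational correction.
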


The rest of the section is devoted to proving \Cref{proposition:reduction_stronger}.

First of all, using the so-called \emph{projection formula} (see \cite[Proposition 8.3(c)]{Fulton1998}) and \Cref{lemma:inspection}, we get:
\[
 \vartheta_{\ast} (K_\G) =
 \vartheta_{\ast}
 \bigl(
  \vartheta^{\ast}(\chow{\G_{12}}) \cdot \sigma_1^{\ast}(\redchow{\G_1}) \cdot \sigma_2^{\ast}(\redchow{\G_2})
 \bigr) =
 \chow{\G_{12}} \cdot \vartheta_{\ast}
 \bigl(
  \sigma_1^{\ast}(\redchow{\G_1}) \cdot \sigma_2^{\ast}(\redchow{\G_2})
 \bigr) \,.
\]
Hence, to prove \Cref{proposition:reduction_stronger}, it is enough to show that
\begin{equation}
\label{equation:equality}
 \vartheta_{\ast}
 \bigl(
  \sigma_1^{\ast}(\redchow{\G_1}) \cdot \sigma_2^{\ast}(\redchow{\G_2})
 \bigr) =
 (\eta_1)_{\ast} (\redchow{\G_1}) \cdot (\eta_2)_{\ast} (\redchow{\G_2}) \,.
\end{equation}

\Cref{equation:equality} is implied by the following claim.

{\textbf{Claim.}} $(\sigma_2)_{\ast} (\sigma_1)^{\ast} = (\eta_2)^{\ast} (\eta_1)_{\ast}$\,.

In fact, if the claim holds we have that
\begin{align*}
 \vartheta_{\ast} \bigl( \sigma_1^{\ast}(\redchow{\G_1}) \cdot \sigma_2^{\ast}(\redchow{\G_2}) \bigr) &= (\eta_2 \, \sigma_2)_{\ast} \bigl( \sigma_1^{\ast}(\redchow{\G_1}) \cdot \sigma_2^{\ast}(\redchow{\G_2}) \bigr) \\
 &= (\eta_2)_{\ast} \bigl(  (\sigma_2)_{\ast} \sigma_1^{\ast}(\redchow{\G_1}) \cdot \redchow{\G_2} \bigr) \quad \text{(proj.\ formula)} \\
 &= (\eta_2)_{\ast} \bigl( \eta_2^{\ast} (\eta_1)_{\ast}(\redchow{\G_1}) \cdot \redchow{\G_2} \bigr)  \quad \text{(claim)} \\
 &= (\eta_1)_{\ast}(\redchow{\G_1}) \cdot(\eta_2)_{\ast}(\redchow{\G_2})  \quad \text{(proj.\ formula)} \,.
\end{align*}
Thus, we are left with proving the claim.
Notice that if the diagram in \Cref{equation:main_diagram} was a fiber product, then the claim would immediately follow from the commutativity of flat pullbacks with proper pushforwards (see \cite[Proposition~1.7]{Fulton1998}) since all maps are both flat and proper.
However, this is not the case.
Still, we can prove that the diagram is ``not too far'' from a fiber square, in the following sense.
Consider
\begin{equation}
\label{equation:fiber_product}
\begin{tikzcd}
 & \M{2V} \arrow[ldd, "\sigma_1"', bend right] \arrow[rdd, "\sigma_2", bend left] \arrow[d, "\gamma"] \\
 & \M{2V_1} \times_{\M{2V_{12}}} \M{2V_2} \arrow[ld, "\alpha_1"] \arrow[rd, "\alpha_2"'] \\
 \M{2V_1} \arrow[rd, "\eta_1"'] & & \M{2V_2} \arrow[ld, "\eta_2"] \\
 & \M{2V_{12}}
\end{tikzcd}
\end{equation}
where $\gamma$ is the morphism determined by the universal property of the fiber product.

\begin{lemma}
\label{lemma:proper_birational}
 The map $\gamma$ is proper and birational.
\end{lemma}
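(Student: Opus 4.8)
The statement to prove is that $\gamma \colon \M{2V} \to \M{2V_1} \times_{\M{2V_{12}}} \M{2V_2}$ is proper and birational. The plan is to treat the two properties separately, since they come from different considerations.

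For properness, I would argue as follows. The moduli space $\M{2V}$ is a smooth projective variety (by \Cref{definition:moduli_vertices}), hence proper over $\operatorname{Spec}(\C)$. The fiber product $\M{2V_1} \times_{\M{2V_{12}}} \M{2V_2}$ is a closed subscheme of the product $\M{2V_1} \times \M{2V_2}$, which is again projective; in any case it is separated over $\C$. A standard cancellation property for properness (see \cite[Corollary~II.4.8(e)]{Hartshorne1977} or the analogous statement in \cite{Fulton1998}) says that if $g \circ f$ is proper and $g$ is separated, then $f$ is proper. Taking $f = \gamma$ and $g$ the structural morphism of the fiber product, we conclude that $\gamma$ is proper. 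This part is routine and should take only a few lines.

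For birationality, the idea is to exhibit a dense open subset of the fiber product over which $\gamma$ is an isomorphism, and the natural candidate is the open locus of genuinely smooth (irreducible) curves. Concretely, let $U \subseteq \M{2V_1} \times_{\M{2V_{12}}} \M{2V_2}$ be the preimage under $\alpha_1$ and $\alpha_2$ of the loci $M_{0,2V_1} \subseteq \M{2V_1}$ and $M_{0,2V_2} \subseteq \M{2V_2}$ parametrizing smooth $2|V_i|$-pointed rational curves, i.e.\ configurations of $2|V_i|$ distinct points on $\p^1$. A point of $U$ is then a pair consisting of $2|V_1|$ distinct points on a $\p^1$ and $2|V_2|$ distinct points on another $\p^1$, such that after forgetting down to the three-vertex data they agree in $\M{2V_{12}}$; since over the smooth locus $\M{2V_{12}}$ is just the configuration space of $6$ distinct points modulo $\mathrm{PGL}_2$, this agreement means the two $\p^1$'s can be identified compatibly with the six common marked points $P_0, Q_0, P_1, Q_1, P_2, Q_2$, which fixes the identification uniquely since three points on $\p^1$ have no nontrivial automorphisms. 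Glueing the two point-configurations along this identification produces a single $\p^1$ with $2|V|$ distinct marked points (the vertex sets overlap only in $\{0,1,2\}$, so the union of markings is well-defined and still distinct), i.e.\ a point of $M_{0,2V} \subseteq \M{2V}$. This construction is inverse to $\gamma$ on $U$, so $\gamma$ restricts to an isomorphism over $U$. Finally $U$ is nonempty — a generic configuration of distinct points works — and dense in the fiber product because the smooth loci are dense in each $\M{2V_i}$ and the fiber product of the maps is dominant onto a dense open; so $\gamma$ is birational.

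The main obstacle, and the point deserving the most care, is verifying that $U$ is \emph{dense} in $\M{2V_1} \times_{\M{2V_{12}}} \M{2V_2}$ and that this fiber product is \emph{irreducible} (or at least that $\gamma$ is dominant onto the right component), so that an isomorphism over a dense open genuinely implies birationality. Since $\M{2V}$ is irreducible, $\gamma(\M{2V})$ is an irreducible constructible set containing the dense open $U$; because $\gamma$ is proper its image is closed, so $\gamma$ is surjective onto $\overline{U}$, and it suffices to know $\overline{U}$ is all of the fiber product. For this one can invoke that the forgetful morphisms $\eta_i$ are flat with irreducible fibers (they are compositions of the universal curve projections, each of which has $\p^1$-fibers), from which it follows that $\M{2V_1} \times_{\M{2V_{12}}} \M{2V_2}$ is irreducible of the expected dimension; then any nonempty open, in particular $U$, is automatically dense. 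I would state this explicitly and cite the relevant properties of forgetful morphisms from \cite{Gallet2020} or \cite{Knudsen1983}.
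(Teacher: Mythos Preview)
Your approach is essentially the paper's: properness from projectivity of source and target, birationality by exhibiting an inverse on the open locus of smooth marked curves. The paper is in fact terser than you --- it just restricts $\gamma$ to $\mathscr{M}_{0,2V}$, observes it is an isomorphism onto its image, and stops, without discussing density of that image in the fiber product. Your attention to irreducibility of the fiber product is a genuine improvement in rigor.

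Two small inaccuracies to fix. First, the assertion that the glued configuration has ``still distinct'' marked points is not automatic on all of your $U$: a marked point indexed by some $v \in V_1 \setminus V_{12}$ may, after identifying the two copies of $\p^1$, land on the same point as one indexed by $w \in V_2 \setminus V_{12}$. Nothing prevents this. The fix is trivial --- shrink $U$ to the open locus where it does not happen, which is still nonempty and dense --- but the sentence as written is false. Second, the forgetful morphisms $\eta_i$ do \emph{not} have irreducible fibers: over a boundary point the fiber of the universal-curve map is the corresponding stable curve, which is nodal and reducible, and compositions inherit this. What is true, and what suffices, is that the \emph{generic} fiber of $\eta_i$ is irreducible (over $M_{0,2V_{12}}$ it is an iterated $\p^1$-bundle up to birational modification); combined with flatness over the irreducible base $\M{2V_{12}}$, this forces any component of the fiber product to dominate the base and hence to coincide with the closure of the generic fiber by equidimensionality. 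So your conclusion stands, but the stated reason needs adjusting.
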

\begin{proof}
 The morphism~$\gamma$ is proper since both $\M{2V}$ and $\M{2V_1} \times_{\M{2V_{12}}} \M{2V_2}$ are projective varieties.
 We now show that it is birational.
 For this, recall that, from the general construction of $\M{2V}$, there exists an open subset~$\mathscr{M}_{2V}$ of~$\M{2V}$ whose elements are irreducible stable curves with marked points,
 namely, tuples of $2|V|$ distinct points in~$\p^1$.
 We restrict $\gamma$ to~$\mathscr{M}_{2V}$ and prove that it is an isomorphism.
 The map $\gamma$ sends a tuple $(P_v)_{v \in V} \cup (Q_v)_{v \in V}$ to the pair of tuples $(P_v)_{v \in V_1} \cup (Q_v)_{v \in V_1}$ and $(P_v)_{v \in V_2} \cup (Q_v)_{v \in V_2}$, which share the tuples $(P_v)_{v \in V_{12}} \cup (Q_v)_{v \in V_{12}}$, hence $\gamma$ is an isomorphism between~$\mathscr{M}_{2V}$ and its image.
\end{proof}
Now that we know that \Cref{equation:main_diagram} ``differs'' from a fiber square by a proper birational morphism,
we can use the following result to ``transfer'' the commutativity of maps from the fiber square to the outer square in \Cref{equation:fiber_product}.

We do not claim any originality about \Cref{lemma:commutativity_birational}, see the Acknowledgments.
Any errors in adapting the explained proof should be ascribed only to us.

\begin{lemma}
\label{lemma:commutativity_birational}
 Let $f \colon X \longrightarrow Y$ and $g \colon Y \longrightarrow Z$ be morphisms of varieties, let $h := g \circ f$.
 Suppose that:
 \begin{itemize}
  \item $f$ is proper and birational;
  \item $g$ and $h$ are flat;
  \item $Z$ is non-singular.
 \end{itemize}
 Then $f_{\ast} \, h^{\ast} = g^{\ast}$.
\end{lemma}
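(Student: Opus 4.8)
The plan is to reduce the statement to a clean application of the standard compatibility between flat pullback and proper pushforward in a fiber square (Fulton, \cite[Proposition~1.7]{Fulton1998}), by building the right fiber square out of the graph morphism of~$f$. The key observation is that, since $f$ is proper and birational, we have $f_{\ast}[X] = [Y]$ (more precisely $f_{\ast}\mathbf{1}_X = \mathbf{1}_Y$, using that $f_{\ast}$ preserves dimension and is the identity on the generic point because $f$ is birational and $Y$ is a variety, hence integral). Then for any class $\alpha$ on $Z$, I want to compute $f_{\ast}\,h^{\ast}(\alpha)$ and compare it to $g^{\ast}(\alpha)$.

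First I would set up the graph morphism $\Gamma_f \colon X \longrightarrow X \times_Z Y$, which exists and is a closed immersion because $f$ is a morphism of $Z$-varieties; here $X$ is a $Z$-variety via $h = g\circ f$ and $Y$ via $g$. Denote by $p \colon X\times_Z Y \to X$ and $q \colon X\times_Z Y \to Y$ the two projections, so that $p\circ\Gamma_f = \mathrm{id}_X$ and $q\circ\Gamma_f = f$. Since $h$ is flat and $Z$ is non-singular, and $g$ is flat, this is a fiber square with $g$ flat; the compatibility of flat pullback and proper pushforward (which requires $f$ proper — satisfied) gives $g^{\ast}\circ (\text{one pushforward}) = (\text{other pushforward})\circ(\text{one pullback})$. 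Concretely, I would apply \cite[Proposition~1.7]{Fulton1998} to the square
\[
\begin{tikzcd}
X \arrow[r, "f"] \arrow[d, "h"'] & Y \arrow[d, "g"] \\
Z \arrow[r, "\mathrm{id}"] & Z
\end{tikzcd}
\]
Wait — this square is trivially commutative but not useful on its own; the content is that $f_{\ast}\mathbf{1}_X=\mathbf{1}_Y$, and then the projection formula does the rest.

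Here is the cleaner route I would actually write down. Using the projection formula \cite[Proposition~8.3]{Fulton1998} for the proper morphism $f$ (applicable since $Z$ is non-singular, so $g^{\ast}\alpha$ is a well-defined operational/Chow class and $Y$ has the needed structure), we have
\[
f_{\ast}\bigl(h^{\ast}\alpha\bigr) = f_{\ast}\bigl(f^{\ast}g^{\ast}\alpha\bigr) = f_{\ast}\bigl(f^{\ast}g^{\ast}\alpha \cdot \mathbf{1}_X\bigr) = g^{\ast}\alpha \cdot f_{\ast}\mathbf{1}_X = g^{\ast}\alpha \cdot \mathbf{1}_Y = g^{\ast}\alpha,
\]
where the crucial middle equality is the projection formula and the fourth equality is $f_{\ast}\mathbf{1}_X = \mathbf{1}_Y$. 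So the whole proof collapses to two facts: (i) $h^{\ast} = f^{\ast}g^{\ast}$ (functoriality of flat pullback, using $g$ and $h$ flat — and one checks $f^{\ast}$ makes sense here, which is where flatness of $h$ and non-singularity of $Z$ enter: $f$ itself need not be flat, but $f^{\ast}$ of a class pulled back from a non-singular $Z$ via a flat map is governed by the refined Gysin/pullback formalism, or more simply one works with the bivariant/operational class $g^{\ast}\alpha$ and its pullback along $f$); and (ii) $f_{\ast}\mathbf{1}_X = \mathbf{1}_Y$ for $f$ proper birational onto the variety $Y$.

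The main obstacle will be making fact (i) rigorous at the level of cycle classes rather than just operational classes: $f^{\ast}$ is not defined for arbitrary morphisms $f$, so I need to be careful that $f^{\ast}(g^{\ast}\alpha)$ means pullback of an operational class $g^{\ast}\alpha \in A^{\ast}(Z)$ (well-defined since $Z$ is non-singular, so $A^{\ast}(Z)\cong A_{\ast}(Z)$) along the morphism $f$, which is always defined for operational classes; and that under this identification $h^{\ast}\alpha$ corresponds to $f^{\ast}g^{\ast}\alpha$, which is just functoriality of operational pullback together with $h = g\circ f$. Once the objects are interpreted this way, the projection formula in the form $f_{\ast}(f^{\ast}\beta \cap \xi) = \beta \cap f_{\ast}\xi$ for $\beta \in A^{\ast}(Y)$ operational and $\xi\in A_{\ast}(X)$ applies verbatim with $\beta = g^{\ast}\alpha$ pulled back to $Y$ — but one must double-check that this operational class on $Y$ is indeed the flat pullback $g^{\ast}$ of the cycle $\alpha$, which holds because $g$ is flat and flat pullback of cycles agrees with operational pullback composed with capping the fundamental class. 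I would therefore spend the bulk of the writeup pinning down these identifications and citing the precise statements in \cite{Fulton1998} (Chapter 8 for operational classes and the projection formula, Proposition~1.7 and Section~1.4 for flat pullback and pushforward of the fundamental class), after which the displayed chain of equalities finishes the proof.
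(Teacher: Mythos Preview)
Your proposal is correct and rests on the same two pillars as the paper's proof: (i) $f_{\ast}[X]=[Y]$ because $f$ is proper and birational, and (ii) a compatibility result from Chapter~8 of \cite{Fulton1998} that turns this into the desired identity. The difference is in packaging. You factor $h^{\ast}=f^{\ast}g^{\ast}$ and then invoke the projection formula, which forces you to confront the fact that $f^{\ast}$ is not defined for an arbitrary~$f$; your fix is to pass to operational/bivariant classes and spend effort justifying the identifications. The paper avoids this detour entirely: it never writes $f^{\ast}$, but instead uses the refined intersection products $[X]\cdot_{h}z$ and $[Y]\cdot_{g}z$ (defined because $Z$ is nonsingular), cites \cite[Proposition~8.1.1(c)]{Fulton1998} for the pushforward compatibility $f_{\ast}([X]\cdot_{h}z)=f_{\ast}[X]\cdot_{g}z$, and then \cite[Proposition~8.1.2(a)]{Fulton1998} to identify these refined products with the flat pullbacks $h^{\ast}z$ and $g^{\ast}z$. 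Your route works but is longer; the paper's route is a two-line citation once you know which propositions to invoke.
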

\begin{proof}
 This result follows from the combination of \cite[Proposition 8.1.1(c)]{Fulton1998} and \cite[Proposition 8.1.2(a)]{Fulton1998}.
 In fact, in \cite[Proposition 8.1.1(c)]{Fulton1998} we take:
 \begin{itemize}
  \item $f$, $g$, and $h$ as in our statement;
  \item $X'=X$, $Y' = Y$, and $Z' = Z$, furthermore $p_X = \mathrm{id}_X$, $p_Y = \mathrm{id}_Y$, and $p_Z = \mathrm{id}_Z$;
  \item $x = [X]$.
 \end{itemize}
 With this choice, the following formula holds:
 \[
  f_{\ast}( [X] \cdot_{h} z) = f_{\ast}([X]) \cdot_{g} z
 \]
 for any class~$z$ in~$Z$, as an equality in the Chow group of~$Y$.
 Since $f$ is birational, we have that $f_{\ast}([X]) = [Y]$.
 Both $g$ and $h$ are flat, so \cite[Proposition 8.1.2(a)]{Fulton1998} gives us that
 \[
  [X] \cdot_{h} z = h^{\ast}(z)
  \quad \text{and} \quad
  [Y] \cdot_{g} z = g^{\ast}(z) \,.
 \]
 Therefore, altogether the previous equalities tell us that for any class~$z$ in~$Z$, we have
 \[
  f_{\ast} \, h^{\ast} (z) = g^{\ast}(z) \,,
 \]
 which is the desired result.
\end{proof}

We have all the tools to prove the claim and conclude this section.
Notice that both the maps~$\alpha_1$ and~$\alpha_2$ are proper and flat since $\eta_1$ and~$\eta_2$ are so.
Hence, \Cref{lemma:proper_birational} allows us to apply \Cref{lemma:commutativity_birational} with $f = \gamma$, $g = \alpha_1$, and $h = \sigma_1$ to get
\[
 \gamma_{\ast} \sigma_1^{\ast} = \alpha_1^{\ast} \,.
\]
Since flat pullbacks commute with proper pushforwards in a fiber square, we have $(\alpha_2)_{\ast} (\alpha_1)^{\ast} = (\eta_2)^{\ast} (\eta_1)_{\ast}$. Then
\begin{align*}
 (\eta_2)^{\ast} (\eta_1)_{\ast} &= (\alpha_2)_{\ast} (\alpha_1)^{\ast} \\
 &= (\alpha_2)_{\ast} \gamma_{\ast} (\sigma_1)^{\ast} \\
 &= (\sigma_2)_{\ast} (\sigma_1)^{\ast} \,.
\end{align*}
This concludes the proof of the claim and hence of \Cref{proposition:reduction_stronger}.
Therefore, we reduced the computation of the class of the configuration space of a graph~$\G$
to the intersection of three classes in the $3$-fold $\M{6}$,
one of which does not depend on~$\G$.
Notice that so far we have neither supposed that $\G$ is minimally rigid, nor that $\G_1$ and~$\G_2$ are calligraphs.

\section{Description of the configurations of the fixed edge}
\label{description}

Recall from \Cref{reduction} that the graph~$\G_{12}$ has vertices $V_{12} = \{0,1,2\}$ and edges $E_{12} = \{ \{1,2\} \}$.
Therefore, the map $\Phi_{\G_{12}}$ as in \Cref{definition:configuration_space} has $\M{2V_{12}} \cong \M{6}$ as domain and $\M{2\{1,2\}} \cong \M{4} \cong \p^1$ as codomain.
Moreover, recall again from \Cref{definition:configuration_space} that $\config{\G_{12}}{\Lambda_{12}}$ is the fiber of~$\Phi_{\G_{12}}$ over an element~$\Lambda_{12} \in \M{2\{1,2\}}$.
The goal of this section is to prove that, for a general $\Lambda_{12}$,
the variety~$\config{\G_{12}}{\Lambda_{12}}$ is isomorphic to the blowup of~$\p^1 \times \p^1$ at four points (\Cref{proposition:description}).
This may be well-known to experts, but we could not locate this result in the literature;
moreover, we believe that the proof that we provide may be accessible with some acquaintance about moduli spaces and intersection theory.

Let us start by remarking that the moduli space~$\M{2V_{12}}$ of stable rational curves with six marked points,
where $\config{\G_{12}}{\Lambda_{12}}$ lies,
is isomorphic to the blowup of the Segre cubic~$S^3$ threefold along its $10$ nodes (see \cite[Section~3]{Beckmann2022}).
The Segre cubic threefold~$S_3$ is the unique (up to projective transformations) cubic hypersurface in~$\p^4$ with $10$ nodes (see for example \cite{Dolgachev2016}).
It admits an interpretation as a moduli space for $6$-tuples of points in~$\p^1$ with at most three coinciding points
that we are soon going to recall.

Recalling from \Cref{definition:configuration_space} the description of the map~$\Phi_{\G_{12}}$, we have a commutative diagram
\begin{equation}
\label{equation:resolution_segre}
\begin{tikzcd}[column sep=huge]
 \M{2V_{12}} \arrow[r, "\text{blowup}"] \arrow[d, "\Phi_{\G_{12}}"'] &  S_3 \arrow[ld, "\Psi_{\G_{12}}", dashrightarrow] \\
 \M{2\{1,2\}}
\end{tikzcd}
\end{equation}
that defines the rational map~$\Psi_{\G_{12}}$.
We know that $\config{\G_{12}}{\Lambda_{12}}$ is a fiber of~$\Phi_{\G_{12}}$ over a general point.
We compute the (closure of) a fiber of~$\Psi_{\G_{12}}$ over a general point of~$\M{2\{1,2\}}$
and in this way we obtain the desired description of~$\config{\G_{12}}{\Lambda_{12}}$.

\begin{proposition}
\label{proposition:description}
 For a general $\Lambda_{12} \in \M{2\{1,2\}}$, the variety~$\config{\G_{12}}{\Lambda_{12}}$ is isomorphic to the blowup of $\p^1 \times \p^1$ at four points.
\end{proposition}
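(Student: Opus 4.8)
The plan is to compute explicitly the (closure of a) general fiber of the rational map~$\Psi_{\G_{12}} \colon S_3 \dashrightarrow \M{2\{1,2\}}$ appearing in diagram~\eqref{equation:resolution_segre}, and then to transfer this description back to~$\config{\G_{12}}{\Lambda_{12}}$ via the blowup morphism. First I would recall the explicit modular description of the Segre cubic~$S_3$: a point of~$S_3$ corresponds to a $6$-tuple of points $(P_0, P_1, P_2, Q_0, Q_1, Q_2)$ on~$\p^1$ (with at most triple coincidences), taken up to~$\mathrm{PGL}_2$. The map $\Psi_{\G_{12}}$ sends such a tuple to the cross-ratio of $(P_1, P_2, Q_1, Q_2)$, which records the spherical length of the single edge $\{1,2\}$. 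Fixing a general value of this cross-ratio means fixing the $\mathrm{PGL}_2$-equivalence class of the four points $P_1, P_2, Q_1, Q_2$; using up the $\mathrm{PGL}_2$-action I would normalize these four points to fixed positions in~$\p^1$, say $0, \infty, 1, t$ with $t$ the fixed general parameter. What then remains free are the positions of~$P_0$ and~$Q_0$, each ranging over~$\p^1$, with the constraint that no more than three of the six points coincide.

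Next I would identify the resulting parameter space. With the four points $P_1, P_2, Q_1, Q_2$ pinned down, the pair $(P_0, Q_0)$ ranges over~$\p^1 \times \p^1$, so the general fiber of~$\Psi_{\G_{12}}$ is birational to~$\p^1 \times \p^1$; since $S_3$ is normal and $\Psi_{\G_{12}}$ is a dominant map to a curve, I expect the closure of a general fiber to be exactly~$\p^1 \times \p^1$ (this is the substance of "the Segre cubic fibers over~$\p^1$ in quadric surfaces", a classical fact I would justify via the explicit equations, or by a dimension/degree count of hyperplane sections of~$S_3 \subset \p^4$). Then I would chase the blowup: the morphism $\M{2V_{12}} \to S_3$ blows up the $10$ nodes of~$S_3$, and I would determine which of these nodes lie on the chosen general fiber. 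A node of~$S_3$ corresponds to a $6$-tuple with exactly two pairs of coinciding points; for the fiber over a general value of the edge length, the nodes that it meets are precisely those obtained by letting $P_0$ (or $Q_0$) collide with one of the four fixed points $P_1, P_2, Q_1, Q_2$ in a way that produces a double coincidence on the complementary pair as well — a short combinatorial check shows there are exactly four such configurations lying on the general fiber. Blowing up these four nodes, the fiber $\config{\G_{12}}{\Lambda_{12}}$ of~$\Phi_{\G_{12}}$ is the strict transform, which is the blowup of~$\p^1 \times \p^1$ at those four points.

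The technical core, and the step I expect to be the main obstacle, is verifying that $\config{\G_{12}}{\Lambda_{12}}$ really equals the blowup of the fiber $\overline{\Psi_{\G_{12}}^{-1}(\Lambda_{12})} \cong \p^1 \times \p^1$ at exactly four points, with no extra exceptional divisors and no component of the exceptional locus of $\M{2V_{12}} \to S_3$ contained in the fiber. Here I would argue as follows: since $\M{2V_{12}} \to S_3$ is an isomorphism away from the nodes and the fiber over a general $\Lambda_{12}$ avoids the indeterminacy issues of the rational map $\Psi_{\G_{12}}$, one has $\config{\G_{12}}{\Lambda_{12}} = \Phi_{\G_{12}}^{-1}(\Lambda_{12})$ mapping to~$S_3$ with image the closed fiber of~$\Psi_{\G_{12}}$, and this map is an isomorphism outside the preimages of the four nodes on that fiber. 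A local computation at each node — using the standard local model of~$\M{2V_{12}}$ near a boundary stratum, where the blowup of the node of~$S_3$ introduces a single $\p^1$ worth of directions — shows the fiber of $\config{\G_{12}}{\Lambda_{12}} \to \overline{\Psi_{\G_{12}}^{-1}(\Lambda_{12})}$ over each such node is a single~$\p^1$, i.e.\ exactly a point blowup. One must also check smoothness of~$\config{\G_{12}}{\Lambda_{12}}$ and that $\overline{\Psi_{\G_{12}}^{-1}(\Lambda_{12})}$ is smooth away from these four nodes, which follows from generic smoothness (char.\ zero) applied to~$\Phi_{\G_{12}}$ together with the fact that $\M{2V_{12}}$ is smooth. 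Assembling these local statements with the global birational identification yields the claim.
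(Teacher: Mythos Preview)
Your overall strategy matches the paper's: both compute a general fiber of~$\Psi_{\G_{12}}$ on the Segre cubic, identify it with~$\p^1 \times \p^1$, locate four nodes of~$S_3$ on that fiber, and conclude by blowing up. However, your execution diverges from the paper's and contains a genuine error that would derail the combinatorial step.

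The error: a node of~$S_3$ does \emph{not} correspond to a $6$-tuple with ``exactly two pairs of coinciding points''. Nodes correspond to S-equivalence classes of strictly semistable configurations, i.e.\ those in which three of the six points coincide; the ten nodes are indexed by the $\binom{6}{3}/2 = 10$ partitions of the six labels into two triples. With your normalization of $P_1, P_2, Q_1, Q_2$ at four distinct points, the only way to land on a node is to set $P_0 = Q_0 = a$ for some $a \in \{P_1, P_2, Q_1, Q_2\}$, producing the triple $\{P_0, Q_0, a\}$ and hence the partition $\{P_0, Q_0, a\} \mid \{\text{the other three}\}$. That is the correct ``combinatorial check'' yielding four nodes; your description in terms of single collisions and ``complementary pairs'' does not produce nodes at all.

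The difference in method: the paper works in explicit coordinates on~$S_3$ via the Howard--Millson--Snowden--Vakil presentation (the five polynomials $p_{X_0}, \dotsc, p_{Y_2}$) and uses the straightening rules to write $\Psi_{\G_{12}}$ as the pencil $(x_0 : -x_1 - x_2 + y_1 + y_2)$, together with an alternative quadratic expression on the overlap. From this one reads off directly that $\Psi_{\G_{12}}$ is undefined at exactly four nodes---the base locus of the pencil---so these four nodes lie on \emph{every} fiber closure, and each fiber closure is visibly the intersection of a hyperplane and a quadric in~$\p^4$, hence a quadric surface, smooth for general parameter. This is tighter than your route: you still owe an argument that your parametric map $\p^1 \times \p^1 \to S_3$ is an \emph{isomorphism} onto the fiber closure (not merely birational), and your way of locating the four nodes requires the correction above. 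The paper's viewpoint also explains cleanly \emph{why} four nodes appear---they are precisely the indeterminacy locus of the cross-ratio map---whereas in your parametric picture this fact is incidental.
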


First of all, we recall the construction of the Segre cubic~$S_3$ as a moduli space of $6$-tuples of points in~$\p^1$ from \cite{Howard2009}.
The construction proceeds as follows:
\begin{itemize}
 \item We consider all possible graphs with vertex set $\{P_0, P_1, P_2, Q_0, Q_1, Q_2\}$ where all the vertices have valency one and such that,
 once we draw the vertices as vertices of a regular hexagon and the edges as straight segments, no two edges intersect.
 We obtain five graphs denoted by $X_0$, $X_1$, $X_2$, $Y_1$, and $Y_2$, depicted in \Cref{figure:kempe_graphs}.
 We select an orientation for each of the edges of these graphs.
 In our case, we orient an edge from its vertex above to its vertex below.
 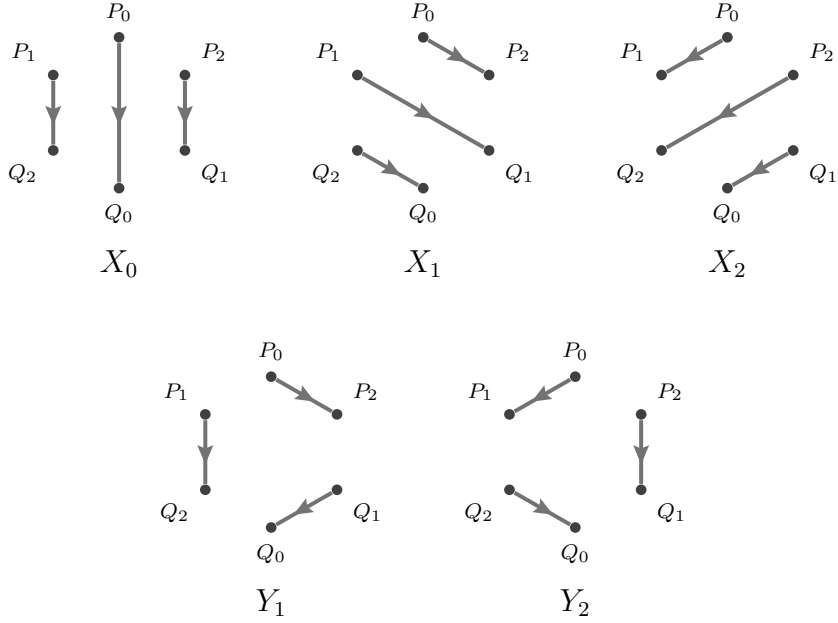
\begin{figure}[ht]
 \centering
 \begin{tikzpicture}
  \begin{scope}
   \hexagon
   \draw[dedge=0.5] (P0)--(Q0);
   \draw[dedge=0.5] (P2)--(Q1);
   \draw[dedge=0.5] (P1)--(Q2);
   \node (L0) at (0,-2) {$X_0$};
  \end{scope}
  \begin{scope}[xshift=4cm]
   \hexagon
   \draw[dedge=0.5] (P1)--(Q1);
   \draw[dedge=0.5] (Q2)--(Q0);
   \draw[dedge=0.5] (P0)--(P2);
   \node (L0) at (0,-2) {$X_1$};
  \end{scope}
 \begin{scope}[xshift=8cm]
   \hexagon
   \draw[dedge=0.5] (P2)--(Q2);
   \draw[dedge=0.5] (Q1)--(Q0);
   \draw[dedge=0.5] (P0)--(P1);
   \node (L0) at (0,-2) {$X_2$};
  \end{scope}
 \begin{scope}[xshift=2cm, yshift=-4.5cm]
   \hexagon
   \draw[dedge=0.5] (P0)--(P2);
   \draw[dedge=0.5] (Q1)--(Q0);
   \draw[dedge=0.5] (P1)--(Q2);
   \node (L0) at (0,-2) {$Y_1$};
  \end{scope}
 \begin{scope}[xshift=6cm, yshift=-4.5cm]
   \hexagon
   \draw[dedge=0.5] (P0)--(P1);
   \draw[dedge=0.5] (P2)--(Q1);
   \draw[dedge=0.5] (Q2)--(Q0);
   \node (L0) at (0,-2) {$Y_2$};
  \end{scope}
 \end{tikzpicture}
 \caption{The five auxiliary graphs that define~$S_3$.}
 \label{figure:kempe_graphs}
 \end{figure}
 \item Each of the vertices of the hexagon corresponds to a copy of~$\p^1$.
 We associate a polynomial to each of the five graphs from \Cref{figure:kempe_graphs} as follows.
 Suppose that $e = \{v, w\}$ is an edge of one of those graphs and suppose that $e$ is oriented from~$v$ to~$w$.
 Let $(s_v: t_v)$ be the coordinates on the~$\p^1$ corresponding to~$v$
 and let~$(s_w: t_w)$ be the coordinates on the~$\p^1$ corresponding to~$w$.
 We set
 \[
  \ell_{e} := s_v t_w - s_w t_v \,.
 \]
 Then for each graph $N = (E_N, V_N)$ in \Cref{figure:kempe_graphs}, we define
 \[
  p_{N} := \prod_{e \in E_N} \ell_e \,.
 \]
 \item We define the rational map $\Gamma \colon (\p^1)^6 \dashrightarrow \p^4$ as $\Gamma := (p_{X_0} : p_{X_1} : p_{X_2} : p_{Y_1} : p_{Y_2})$.
 On $6$-tuples of distinct points, the map~$\Gamma$ behaves as a quotient map,
 namely sends all $\p\mathrm{GL}_2$-equivalent $6$-tuples to the same point.
 \item We consider the subset $\parameter \subset (\p^1)^6$ defined by
 \[
  \parameter :=
  \bigl\{
   (a_1, \dotsc, a_6) \in (\p^1)^6 \,\mid\,
   \text{ at most three } a_i \text{'s coincide}
  \bigr\} \,.
 \]
 The image of~$\parameter$ under~$\Gamma$ is the Segre cubic~$S_3$.
 We write $x_0$, $x_1$, $x_2$, $y_1$, and $y_2$ for the coordinates of~$\p^4$ where $S_3$ lives.
 In these coordinates, the threefold~$S_3$ has the equation
 \[
  x_0 x_1 x_2 - y_1 y_2 (x_0 + x_1 + x_2 - y_1 - y_2) = 0 \,.
 \]
 The real structure in $(\p^1)^6$
 that swaps the coordinates labeled by~$P_i$ with those labeled by $Q_i$ for $i \in \{0,1,2\}$
 induces a real structure on~$S_3 \subset \p^4$ for which the $10$ nodes come into five conjugated pairs.
 This real structure is compatible, via the diagram in \Cref{equation:resolution_segre},
 with the real structure on~$\M{2V_{12}}$ described in \Cref{configurations}.
 Moreover, it determines an automorphism of the coordinate ring of~$\p^4$
 such that the variables $x_i$ are conjugated to $-x_i$ for $i \in \{0,1,2\}$,
 while $y_1$ is conjugated to~$-y_2$.
 The reason for this is that, when in \Cref{figure:kempe_graphs} we swap the vertices labelled with~$P_i$ by those labelled with~$Q_i$, what happens is that each directed graph~$X_i$ is mapped to the same graph with all the edge orientations reversed, while the directed graph~$Y_1$ is mapped to~$Y_2$ with all the edge orientations reversed.
 Due to the rule associating a directed graph to its polynomial explained above,
 changing the orientation of three edges changes the sign of the polynomial.
 This concludes the explanation of the automorphism provided by the real structure.
\end{itemize}

Notice that, from the construction of~$S_3$ that we reported above, it follows that we can define a rational map $S_3 \dashrightarrow \M{2\{1,2\}}$ by specifying a rational map $\parameter \dashrightarrow \p^1$ that is $\p\mathrm{GL}_2$-invariant.
In particular, by inspecting \Cref{equation:resolution_segre}, the map $\Psi_{12} \colon S_3 \dashrightarrow \M{2\{1,2\}}$ is the one induced by the rational map $\parameter \dashrightarrow \p^1$ that takes the cross-ratio of the points~$P_1$, $Q_1$, $P_2$, and~$Q_2$.
In fact, this is precisely what $\Psi_{12}$ does when we apply it to irreducible rational stable curves for which at most three of the marked points $P_1$, $Q_1$, $P_2$, and~$Q_2$ coincide.
Let $(s_1^P: t_1^P)$, $(s_1^Q: t_1^Q)$, $(s_2^P: t_2^P)$, and $(s_2^Q: t_2^Q)$ be their coordinates, respectively.
Their cross-ratio is:
\begin{equation}
\label{equation:cross_ratio}
 \bigl(
 (s_2^P t_1^Q - s_1^Q t_2^P)(s_2^Q t_1^P - s_1^P t_2^Q)
 :
 (s_2^P t_1^P - s_1^P t_2^P)(s_2^Q t_1^Q - s_1^Q t_2^Q)
 \bigr) \,.
\end{equation}
Our goal is now to infer from \Cref{equation:cross_ratio} an explicit expression of~$\Psi_{\G_{12}}$ as a map from~$S_3$ to~$\p^1$.
If we multiply\footnote{This may enlarge the subset where the rational map is not defined,
but we can recover the missing points
by expressing the rational map in a different way.}
both the homogeneous coordinates in \Cref{equation:cross_ratio} by $(s_0^P t_0^Q - s_0^Q t_0^P)$,
the first component of $\Psi_{\G_{12}}$ equals the polynomial~$p_{X_0}$ from the construction of~$S_3$.
This means that, when we pass to the map~$\Psi_{\G_{12}}$, its first coordinate is~$x_0$.
The second coordinate of the cross ratio from \Cref{equation:cross_ratio} becomes
\begin{equation}
\label{equation:second_coordinate}
 (s_0^P t_0^Q - s_0^Q t_0^P)(s_2^P t_1^P - s_1^P t_2^P)(s_2^Q t_1^Q - s_1^Q t_2^Q) \,,
\end{equation}
which is the polynomial corresponding to the directed graph in \Cref{figure:graph_second_coordinate},
according to the rule that we used in the construction of~$S_3$.
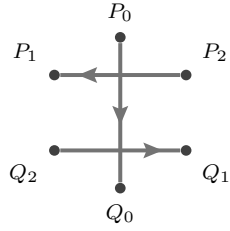
\begin{figure}[ht]
\centering
\begin{tikzpicture}
 \hexagon
 \draw[dedge=0.5] (P0)--(Q0);
 \draw[dedge=0.75] (P2)--(P1);
 \draw[dedge=0.75] (Q2)--(Q1);
\end{tikzpicture}
\caption{Graph for the polynomial from \Cref{equation:second_coordinate}.}
\label{figure:graph_second_coordinate}
\end{figure}
Now we apply a result from~\cite[Section~2.2]{Howard2009},
the so-called \emph{straightening rules},
to replace the second component of $\Psi_{\G_{12}} \circ \Gamma$ with another one without changing the map.
This is obtained by decomposing the graph from \Cref{figure:graph_second_coordinate}
into ones whose edges do not intersect, as described in \Cref{figure:straightening}.
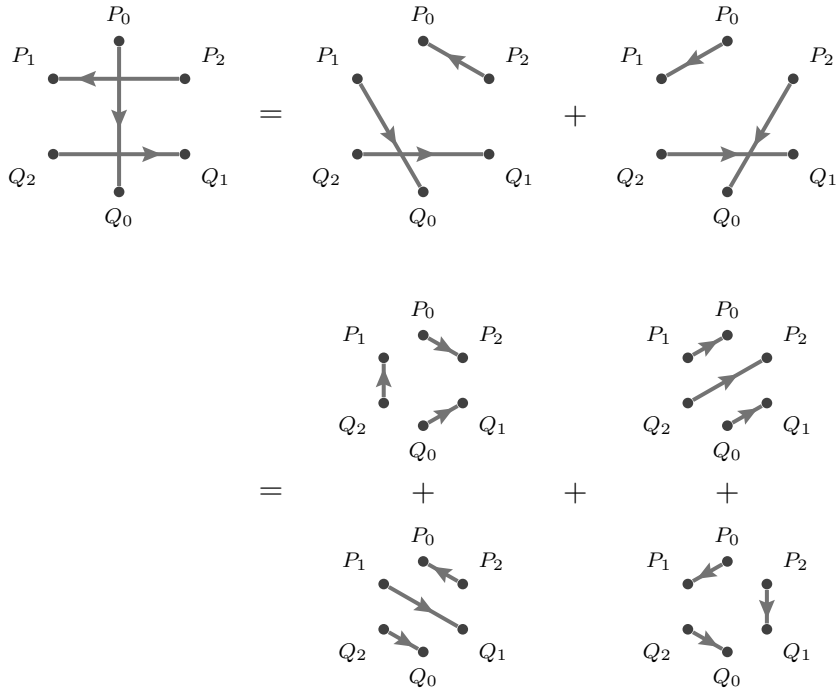
\begin{figure}[ht]
\centering
\begin{tikzpicture}
\begin{scope}
 \hexagon
 \draw[dedge=0.5] (P0)--(Q0);
 \draw[dedge=0.75] (P2)--(P1);
 \draw[dedge=0.75] (Q2)--(Q1);
\end{scope}
\begin{scope}[xshift=2cm]
 \node at (0,0) {=};
\end{scope}
\begin{scope}[xshift=4cm]
 \hexagon
 \draw[dedge=0.5] (P2)--(P0);
 \draw[dedge=0.5] (P1)--(Q0);
 \draw[dedge=0.5] (Q2)--(Q1);
\end{scope}
\begin{scope}[xshift=6cm]
 \node at (0,0) {+};
\end{scope}
\begin{scope}[xshift=8cm]
 \hexagon
 \draw[dedge=0.5] (P0)--(P1);
 \draw[dedge=0.5] (P2)--(Q0);
 \draw[dedge=0.5] (Q2)--(Q1);
\end{scope}
\begin{scope}[xshift=2cm, yshift=-5cm]
 \node at (0,0) {=};
\end{scope}
\begin{scope}[xshift=4cm, yshift=-3.5cm, scale=0.6]
 \hexagon
 \draw[dedge=0.5] (P0)--(P2);
 \draw[dedge=0.5] (Q0)--(Q1);
 \draw[dedge=0.5] (Q2)--(P1);
\end{scope}
\begin{scope}[xshift=4cm, yshift=-5cm, scale=0.6]
 \node at (0,0) {+};
\end{scope}
\begin{scope}[xshift=4cm, yshift=-6.5cm, scale=0.6]
 \hexagon
 \draw[dedge=0.5] (P2)--(P0);
 \draw[dedge=0.5] (P1)--(Q1);
 \draw[dedge=0.5] (Q2)--(Q0);
\end{scope}
\begin{scope}[xshift=6cm, yshift=-5cm]
 \node at (0,0) {+};
\end{scope}
\begin{scope}[xshift=8cm, yshift=-3.5cm, scale=0.6]
 \hexagon
 \draw[dedge=0.5] (P1)--(P0);
 \draw[dedge=0.5] (Q0)--(Q1);
 \draw[dedge=0.5] (Q2)--(P2);
\end{scope}
\begin{scope}[xshift=8cm, yshift=-5cm, scale=0.6]
 \node at (0,0) {+};
\end{scope}
\begin{scope}[xshift=8cm, yshift=-6.5cm, scale=0.6]
 \hexagon
 \draw[dedge=0.5] (P0)--(P1);
 \draw[dedge=0.5] (P2)--(Q1);
 \draw[dedge=0.5] (Q2)--(Q0);
\end{scope}
\end{tikzpicture}
\caption{Straightening procedure for the graph from \Cref{figure:graph_second_coordinate}.}
\label{figure:straightening}
\end{figure}
This means that the polynomial from \Cref{equation:second_coordinate}
determines the same form on $S_3$ as $-p_{X_1} - p_{X_2} + p_{Y_1} + p_{Y_2}$
(here we also used that reversing the orientation of an edge in a graph changes the sign of the corresponding polynomial).
In other words, the second coordinate of the map~$\Psi_{\G_{12}}$ is given by the linear form
\[
 -x_1 - x_2 + y_1 + y_2 \,.
\]
Thus, the map
\[
 \Psi_{\G_{12}} \colon S_3 \dashrightarrow \M{2\{1,2\}} \cong \p^1 \,,
\]
when $x_0 \neq 0$ or $-x_1 - x_2 + y_1 + y_2 \neq 0$, is:
\begin{equation}
\label{equation:map_psi}
 (x_0: x_1: x_2: y_1: y_2) \mapsto (x_0: -x_1 - x_2 + y_1 + y_2) \,.
\end{equation}
This map is not defined on the plane $x_0 = -x_1 - x_2 + y_1 + y_2 = 0$.
On the other hand, notice that the equation of~$S_3$ can be written as
\[
 \mathrm{det}
 \left(
 \begin{array}{cc}
  x_0 & -x_1 - x_2 + y_1 + y_2 \\
  -y_1 y_2 & x_1 x_2 - y_1 y_2
 \end{array}
 \right) = 0 \,.
\]
Therefore, in a suitable open subset of~$S_3$, the map from \Cref{equation:map_psi} coincides with
\[
 (x_0: x_1: x_2: y_1: y_2) \mapsto (-y_1 y_2: x_1 x_2 - y_1 y_2) \,.
\]
In this way, we conclude that the map~$\Psi_{\G_{12}}$ is defined on the whole~$S_3$, except for the four nodes
\begin{equation}
\label{equation:nodes}
 (0:0:1:0:1) \,, \;\;
 (0:0:1:1:0) \,, \;\;
 (0:1:0:1:0) \,, \;\;
 (0:1:0:0:1) \,.
\end{equation}
Notice that the first two and the last two nodes in \Cref{equation:nodes} are conjugated under the real structure of~$S_3$ described before.
It is not possible to further extend the map~$\Psi_{\G_{12}}$ to any of the four previous nodes,
since they are the images of the loci in $\parameter \subset (\p^1)^6$ where three out of the four points~$P_1$, $Q_1$, $P_2$, and~$Q_2$ coincide,
and the cross-ratio is not defined for those four-tuples.

The fiber of~$\Psi_{\G_{12}}$ over a general point $(\mu_1 : \mu_2) \in \p^1$ is the subvariety
\begin{equation}
\label{equation:fiber}
 F_{\mu_1, \mu_2} :=
 \bigl\{
  \mu_2 x_0 - \mu_1 (-x_1 - x_2 + y_1 + y_2) = \mu_1 x_1 x_2 - (\mu_1 - \mu_2) y_1 y_2 = 0
 \bigr\}
\end{equation}
minus the four nodes from \Cref{equation:nodes}.
For a general $(\mu_1: \mu_2)$, the subvariety~$F_{\mu_1, \mu_2}$ is a smooth two-dimensional quadric,
hence it is isomorphic to~$\p^1 \times \p^1$.
Recall now from \Cref{equation:resolution_segre} that blowing up~$S_3$ at its nodes resolves the indeterminacies of~$\Psi_{\G_{12}}$.
Therefore, a general fiber of the morphism~$\Phi_{\G_{12}}$ is isomorphic to the blowup of~$\p^1 \times \p^1$ at four points.
This proves \Cref{proposition:description}.
When this fiber is taken over a real point, the variety $\config{\G_{12}}{\Lambda_{12}}$ is endowed with a real structure that swaps the two pairs of exceptional divisors of the blowup and the two families of lines in~$\p^1 \times \p^1$.

\section{Classes for spherical calligraphs}
\label{classes}

We now combine the results of \Cref{reduction} and \Cref{description} to define $S^2$-classes of calligraphs and their product.
Moreover, we prove that,
when we have a calligraphic split of a minimally rigid graph,
the number of complex realizations of such a graph on the sphere up to rotations,
is the product of the $S^2$-classes of the two calligraphs in the split.

From \Cref{reduction}, we know that the degree of the class of the configuration space of a graph~$\G$ with a split $\G = \G_1 \cup \G_2$ is
\[
 \int_{\M{2V_{12}}} \chow{\G_{12}} \cdot (\eta_1)_{\ast} (\redchow{\G_1}) \cdot (\eta_2)_{\ast} (\redchow{\G_2}) \,.
\]
From \Cref{description}, we know that $\config{\G_{12}}{\Lambda_{12}}$ is a nonsingular hypersurface in~$\M{2V_{12}}$ for a general~$\Lambda_{12} \in \M{2\{1,2\}}$.
Therefore, the inclusion $i \colon \config{\G_{12}}{\Lambda_{12}} \longrightarrow \M{2V_{12}}$ is a regular embedding,
so by \cite[Example~8.1.1]{Fulton1998}:
\[
 \chow{\G_{12}} \cdot (\eta_1)_{\ast} (\redchow{\G_1}) \cdot (\eta_2)_{\ast} (\redchow{\G_2})
 =
 i_{\ast} i^{\ast} \bigl( (\eta_1)_{\ast} (\redchow{\G_1}) \cdot (\eta_2)_{\ast} (\redchow{\G_2}) \bigr) \,.
\]
Since both $\config{\G_{12}}{\Lambda_{12}}$ and~$\M{2V_{12}}$ are nonsingular,
the map~$i^{\ast}$ is a ring homomorphism between the respective Chow rings, hence
\[
 \int_{\M{2V_{12}}} \chow{\G_{12}} \cdot (\eta_1)_{\ast} (\redchow{\G_1}) \cdot (\eta_2)_{\ast} (\redchow{\G_2})
 =
 \int_{\config{\G_{12}}{\Lambda_{12}}} i^{\ast} (\eta_1)_{\ast} (\redchow{\G_1}) \cdot i^{\ast} (\eta_2)_{\ast} (\redchow{\G_2}) \,.
\]
Summing up, so far we got that
\begin{equation}
\label{equation:degree_product}
 \int_{\M{2V}} \chow{\G} =
 \int_{\config{\G_{12}}{\Lambda_{12}}} i^{\ast} (\eta_1)_{\ast} (\redchow{\G_1}) \cdot i^{\ast} (\eta_2)_{\ast} (\redchow{\G_2}) \,.
\end{equation}
and to obtain this result we have not used yet that any of~$\G_1$ and~$\G_2$ are calligraphs.
Now comes the time for using this hypothesis.

Let $\calli = (V, E)$ be a calligraph.
If $\Lambda$ is a general choice of edge lengths for~$\calli$,
then $\redconfig{\calli}{\widetilde{\Lambda}}$ from \Cref{definition:reduced_configuration_space} is a smooth surface.
Indeed, the variety $\redconfig{\calli}{\widetilde{\Lambda}}$ is a fiber over a general point of a morphism between smooth manifolds,
so it is smooth by Sard's theorem.
Moreover, since by definition adding $\{0,1\}$ or $\{0,2\}$ makes $\calli$ rigid,
the image of $\redconfig{\calli}{\widetilde{\Lambda}}$ under the forgetful morphism $\eta \colon \M{2V} \longrightarrow \M{2\{0,1,2\}}$ is still a surface.
By construction, this surface is different from~$\config{\G_{12}}{\Lambda_{12}}$, therefore
\[
 i^{\ast} \eta_{\ast} (\redchow{\calli}) \in A^1(\config{\G_{12}}{\Lambda_{12}}) \,,
\]
where $A^1(\cdot)$ are the classes of codimension~$1$ cycles in the Chow ring.
From \Cref{description}, we know that $\config{\G_{12}}{\Lambda_{12}}$ is isomorphic to the blowup of~$\p^1 \times \p^1$ at four points, thus
\begin{equation}
\label{equation:chow_ring}
 A(\config{\G_{12}}{\Lambda_{12}}) \cong \frac{\Z[F_1, F_2, E_1, E_2, E_3, E_4]}{(F_1^2, F_2^2, F_1 F_2 - 1, \{ F_i E_j \}_{\substack{i \in \{1,2\} \\ j \in \{ 1,\dotsc,4\}}}, \{ E_j^2 + 1 \}_{j \in \{ 1,\dotsc,4\}})} \,.
\end{equation}
In \Cref{equation:chow_ring}, the isomorphism is the one induced by the structure of~$\config{\G_{12}}{\Lambda_{12}}$ as a blowup of~$\p^1 \times \p^1$ described in \Cref{description}, so the classes of the two families of lines of~$\p^1 \times \p^1$ are denoted by~$F_1$ and~$F_2$,
while the classes of the exceptional divisors are denoted by $E_1, \dotsc, E_4$.
The real structure on $\config{\G_{12}}{\Lambda_{12}}$ described in \Cref{description} determines an involution of $A(\config{\G_{12}}{\Lambda_{12}})$ that swaps~$F_1$ with~$F_2$, $E_1$ with~$E_2$, and $E_3$ with~$E_4$.
Taking into account that $\redconfig{\calli}{\widetilde{\Lambda}}$ and $\config{\G_{12}}{\Lambda_{12}}$ are real varieties
when $\Lambda$ comes from a real assignment of edge lengths,
it follows that the quantity $i^{\ast} \eta_{\ast} (\redchow{\calli})$ can be uniquely expressed as
\begin{equation}
\label{equation:blowup_class}
  a (F_1 + F_2) - b (E_1 + E_2) - c (E_3 + E_4) \,.
\end{equation}
where $a, b, c \in \Z$.

\begin{definition}
\label{definition:S2_class}
Let $\calli$ be a calligraph.
The \emph{$S^2$-class} of~$\calli$, denoted~$[\calli]$, is the triple $(a,b,c) \in \Z^3$ from \Cref{equation:blowup_class} that expresses~$i^{\ast} \eta_{\ast} (\redchow{\calli})$ with respect to the generators of~$A(\config{\G_{12}}{\Lambda_{12}})$.
\end{definition}

\begin{definition}
\label{definition:quadratic_form}
We define a quadratic form
 \[
  \begin{array}{rccc}
   \_ \cdot \_ \colon & \Z^3 \times \Z^3 & \longrightarrow & \Z \\
   & (a_1, b_1, c_1), (a_2, b_2, c_3) & \mapsto & 2 (a_1 a_2  - b_1 b_2 - c_1 c_2)
  \end{array} \,.
 \]
This quadratic form allows us to multiply two $S^2$-classes.
\end{definition}

Now we have the tools to prove the first part of \Cref{theorem:main}.

\begin{proposition}
\label{proposition:main_classes}
 If $\G$ is a minimally rigid graph and $(\G_1, \G_2$) is a calligraphic split of~$\G$,
 then the number of complex realizations of~$\G$ on the sphere up to rotations is equal to the product of
 the classes of $\G_1$ and $\G_2$. In other words, $\scount{\G}=[\G_1] \cdot [\G_2]$.
 \end{proposition}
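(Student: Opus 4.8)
The plan is to combine \Cref{equation:degree_product} with the explicit description of the Chow ring of $\config{\G_{12}}{\Lambda_{12}}$ from \Cref{equation:chow_ring} and the definition of the $S^2$-class. By \Cref{equation:degree_product}, the number $\scount{\G} = \int_{\M{2V}} \chow{\G}$ equals the degree, computed inside $\config{\G_{12}}{\Lambda_{12}}$, of the product $i^{\ast}(\eta_1)_{\ast}(\redchow{\G_1}) \cdot i^{\ast}(\eta_2)_{\ast}(\redchow{\G_2})$. By \Cref{definition:S2_class}, writing $[\G_1] = (a_1, b_1, c_1)$ and $[\G_2] = (a_2, b_2, c_2)$, the two factors are respectively $a_1(F_1 + F_2) - b_1(E_1 + E_2) - c_1(E_3 + E_4)$ and $a_2(F_1 + F_2) - b_2(E_1 + E_2) - c_2(E_3 + E_4)$.

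The core of the argument is then a direct computation in the ring $A(\config{\G_{12}}{\Lambda_{12}})$ presented in \Cref{equation:chow_ring}. First I would multiply the two divisor classes using the relations $F_1^2 = F_2^2 = 0$, $F_1 F_2 = 1$, $F_i E_j = 0$, and $E_j^2 = -1$. Expanding, the cross terms between $F$'s and $E$'s vanish; the $F$-part contributes $2 a_1 a_2 \, F_1 F_2 = 2 a_1 a_2$ times the point class; the $E_1, E_2$ block contributes $b_1 b_2 (E_1 + E_2)^2 = b_1 b_2 (E_1^2 + E_2^2) = -2 b_1 b_2$ times the point class; and similarly the $E_3, E_4$ block contributes $-2 c_1 c_2$. (Here I am using that $E_1 E_2 = E_3 E_4 = 0$ since distinct exceptional divisors of a blowup at distinct points are disjoint, which I would either note as implicit in the presentation or add explicitly as a relation.) Taking the degree — i.e., the coefficient of the class of a point — yields exactly $2(a_1 a_2 - b_1 b_2 - c_1 c_2)$, which is precisely $[\G_1] \cdot [\G_2]$ by \Cref{definition:quadratic_form}. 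This completes the proof of \Cref{proposition:main_classes}.

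The main obstacle, and the step requiring the most care, is justifying that \Cref{equation:degree_product} may legitimately be fed the $S^2$-classes as defined: namely, that $i^{\ast}(\eta_j)_{\ast}(\redchow{\G_j})$ for $j \in \{1,2\}$ is really a class in $A^1(\config{\G_{12}}{\Lambda_{12}})$ expressible in the form \Cref{equation:blowup_class}, and that the $\eta_j$ appearing in \Cref{proposition:reduction} coincides (after restriction) with the morphism $\eta$ used in \Cref{definition:S2_class}. This hinges on the facts established just before \Cref{definition:S2_class}: that $\redconfig{\G_j}{\widetilde{\Lambda}_j}$ is a smooth surface for general edge lengths (Sard), that its image under $\eta_j$ is again a surface because adding the edge $\{0,1\}$ or $\{0,2\}$ to the calligraph makes it rigid, and that this image is distinct from $\config{\G_{12}}{\Lambda_{12}}$ so that the pullback lands in codimension one. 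Once these identifications are in place, together with the compatibility of the real structures (which forces the symmetric form \Cref{equation:blowup_class}), the remaining computation is the routine intersection-number calculation sketched above, and the statement follows.
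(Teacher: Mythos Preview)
Your proposal is correct and follows essentially the same route as the paper's proof: invoke \Cref{proposition:count_degree} and \Cref{equation:degree_product}, substitute the $S^2$-classes via \Cref{definition:S2_class}, and compute the intersection number on the blowup of~$\p^1\times\p^1$ at four points to obtain $2(a_1a_2-b_1b_2-c_1c_2)$. You spell out the Chow-ring calculation more explicitly than the paper (which simply appeals to ``the intersection theory of the blowup''), and you rightly flag that the vanishing $E_iE_j=0$ for $i\neq j$ is used but not listed among the relations in \Cref{equation:chow_ring}.
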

\begin{proof}
 If $\G$ is minimally rigid and $(\G_1, \G_2$) is a calligraphic split of~$\G$,
 then by what we have proven so far and recalling \Cref{proposition:count_degree}, we have
 \[
  \scount{\G}
  =
  \int_{\config{\G_{12}}{\Lambda_{12}}} i^{\ast} (\eta_1)_{\ast} (\redchow{\G_1}) \cdot i^{\ast} (\eta_2)_{\ast} (\redchow{\G_2}) \,.
 \]
 Since both $\G_1$ and $\G_2$ are calligraphs, they have $S^2$-classes $[\G_i] = (a_i, b_i, c_i)$.
 The intersection theory of the blowup of $\p^1 \times \p^1$ at four points shows then that
 \begin{multline*}
  \int_{\config{\G_{12}}{\Lambda_{12}}} i^{\ast} (\eta_1)_{\ast} (\redchow{\G_1}) \cdot i^{\ast} (\eta_2)_{\ast} (\redchow{\G_2}) =
  \\ \bigl( a_1(F_1 + F_2) - b_1(E_1 + E_2) - c_1(E_3 + E_4) \bigr)\!
  \bigl( a_2(F_1 + F_2) - b_2(E_1 + E_2) - c_2(E_3 + E_4) \bigr) = \\
  2 (a_1 a_2  - b_1 b_2 - c_1 c_2) \,,
 \end{multline*}
 which, together with the previous discussion, concludes the proof.
\end{proof}

The second part of \Cref{theorem:main} follows from a concrete computation.

Let us compute the $S^2$-class of the calligraph~$\lef$.
To this end, we try to understand $\redconfig{\lef}{\widetilde{\Lambda}}$ for a general
\[
 \Lambda = (\Lambda_{01}, \Lambda_{12}) \in \M{2\{0,1\}} \times \M{2\{1,2\}}
\]
and its intersection with $\config{\G_{12}}{\Lambda_{12}}$,
which is equal to~$\config{\lef}{\Lambda}$.
As in \Cref{description}, we examine the corresponding subvarieties in the Segre cubic~$S_3$.
Using the same arguments that yield \Cref{equation:fiber}, we express the blowdown of~$\config{\lef}{\Lambda}$ in~$S_3$ as the intersection of the two loci:
\begin{align*}
 \mu_2 x_0 - \mu_1 (x_1 + x_2 - y_1- y_2) = \mu_1 x_1 x_2 - (\mu_1 + \mu_2) y_1 y_2 &= 0 \quad \text{and} \\
 \delta_2 x_2 - \delta_1 (x_0 + x_1 - y_1 - y_2) = \delta_1 x_0 x_1 - (\delta_1 + \delta_2) y_1 y_2 &= 0 \,,
\end{align*}
where $\Lambda_{12} = (\mu_1: \mu_2)$ and $\Lambda_{01} = (\delta_1: \delta_2)$,
once we identify both~$\M{2\{1,2\}}$ and~$\M{2\{0,1\}}$ with~$\p^1$.
A computer algebra calculation shows that this intersection has equations
\begin{align*}
 \mu_1 (\delta_1 \mu_1 - \delta_2 \mu_2) x_2^2 - \delta_1 \mu_1 (\mu_1 + \mu_2) x_2 y_1 - \delta_1 \mu_1 (\mu_1 + \mu_2) x_2 y_2 +  \delta_1 (\mu_1 + \mu_2)^2 y_1 y_2 &= 0 \,, \\
 \delta_1 (\mu_1 + \mu_2) x_0 - (\mu_1 \delta_1 + \mu_2 \delta_2) x_2 &= 0 \,, \\
 \delta_1 (\mu_1 + \mu_2) (x_1 - y_1 - y_2) + (\mu_1 \delta_1 - \mu_2 \delta_2) x_2 &= 0 \,,
\end{align*}
namely, it is a conic passing through the two nodes
\[
 (0: 1: 0: 1: 0)
 \quad \text{and} \quad
 (0: 1: 0: 0: 1)
\]
(which are conjugated under the real structure of~$S_3$)
and no other node of~$S_3$.
Therefore, the $S^2$-class of~$\lef$ is $(1, 1, 0)$.
By symmetry, the $S^2$-class of~$\righ$ is $(1, 0, 1)$.

Now that we know the $S^2$-classes of $\lef$ and $\righ$,
we can constrain the possibilities for $[\cente{v}]$ from the knowledge of the number of realizations of some simple graphs.
In fact, both $\lef \cup \cente{v}$ and $\righ \cup \cente{v}$ are isomorphic to the graph
\begin{center}
\begin{tikzpicture}[scale = 2]
 \node[vertex] (A) at (0,0) {};
 \node[vertex] (B) at (0,1) {};
 \node[vertex] (C) at (1,0) {};
 \node[vertex] (D) at (1,1) {};
 \draw[edge] (A)edge(B) (B)edge(D) (D)edge(C) (C)edge(A) (A)edge(D);
\end{tikzpicture}
\end{center}
whose number of realizations on the sphere, up to rotations, is $4$.
This means that $[\lef] \cdot [\cente{v}] = [\righ] \cdot [\cente{v}] = 4$.
Hence, if we write $[\cente{v}] = (a, b, c)$, then
\[
 \left\{
 \begin{array}{l}
  2(a - b) = 4 \,, \\
  2(a - c) = 4 \,,
 \end{array}
 \right.
 \quad \Rightarrow \quad
 \left\{
 \begin{array}{l}
  b = c \,, \\
  a = b + 2 \,.
 \end{array}
 \right.
\]
Since $\cente{v} \cup \cente{w}$ is isomorphic to
\begin{center}
\begin{tikzpicture}[scale = 2]
 \node[vertex] (A) at (0,0) {};
 \node[vertex] (B) at (0,1) {};
 \node[vertex] (C) at (1,0) {};
 \node[vertex] (D) at (1,1) {};
 \node[vertex] (E) at (0.5, 1.5) {};
 \draw[edge] (A)edge(B) (B)edge(C) (C)edge(D) (C)edge(A) (A)edge(D) (B)edge(E) (D)edge(E);
\end{tikzpicture}
\end{center}
we know that $[\cente{v}] \cdot [\cente{w}] = 8$, so $a^2 - b^2 - c^2 = 4$.
This, together with the previous two relations, implies that there are only two possibilities for~$[\cente{v}]$, namely
\[
 (2, 0, 0)
 \quad \text{or} \quad
 (6, 4, 4) \,.
\]
To prove that $[\cente{v}]$ is precisely~$(2,0,0)$, we employ a technique that relies on a deeper understanding of the workings of the divisors~$D_{I,J}$.
Therefore, in order not to weigh down the exposition, we decided to place the final part of this proof in an appendix (see \Cref{sec:s2class}).

This last discussion, together with \Cref{proposition:main_classes}, concludes the proof of \Cref{theorem:main}.

We end the section by showing that, for our purposes, it is actually irrelevant whether $[\cente{v}] = (2,0,0)$ or $[\cente{v}] = (6,4,4)$; see \Cref{remark:644}.
In fact, there is an automorphism of $\Z$-modules of~$\Z^3$ into itself that maps $(1,1,0)$ and~$(1,0,1)$ to themselves and sends $(2,0,0)$ to $(6,4,4)$.
This map is given by the matrix
\[
 O =
 \begin{pmatrix}
  3 & -2 & -2\\
  2 & -1 & -2\\
  2 & -2 & -1
 \end{pmatrix} \,.
\]
One can then check that if $A$ is the matrix of the quadratic form in \Cref{theorem:main}, namely
\[
 A =
 \begin{pmatrix}
  2 & 0 & 0 \\
  0 & -2 & 0\\
  0 & 0 & -2
 \end{pmatrix} \,,
\]
then $O A O^{t} = A$, i.\,e., the transformation $O$ preserves the quadratic form of~$A$.
Hence, the matrix~$O$ determines an automorphism of the subring of real classes in the Chow ring of $\config{\G_{12}}{\Lambda_{12}}$
that keeps the $S^2$-classes of~$\lef$ and~$\righ$ invariant, and swaps $(2,0,0)$ and $(6,4,4)$.
Therefore, any of these two triples could be chosen as the $S^2$-class of~$\cente{v}$.

\begin{remark}
Define the \emph{coupler curve} and \emph{coupler multiplicity} for calligraphs as in
\cite[Section~2.2]{Grasegger2022}, but for the sphere instead of the plane.
If $(2,0,0)$ is chosen as the $S^2$-class of $\cente{v}$, then the first integer of the $S^2$-class
of a calligraph equals (up to coupler multiplicity)
the degree of a general coupler curve~$D \subset S^2_{\C}$ of this calligraph.
The second number corresponds to the multiplicity of~$D$ at those complex conjugate points
on the conic at infinity, such that the planes passing through these points are parallel to
the tangent plane of~$S^2_{\C}$ at the realization of vertex~$1$.
Similarly for the third number, but with vertex~$1$ replaced by vertex~$2$.
\end{remark}

\section{The recursion}
\label{recursion}

In this section, we use the theory we have been developing so far, in particular \Cref{theorem:main},
to synthesize a recursive procedure for the computation of the number of realizations on the sphere,
up to rotations, of a minimally rigid graph.
This can be considered as an ``enhanced'' version of the algorithm developed in~\cite{Gallet2020}.

Say that we are given a minimally rigid graph~$\G$ and we want to compute~$\scount{\G}$.
If $(\G_1, \G_2)$ is a calligraphic split of~$\G$, then by \Cref{theorem:main} we know that
\[
 \scount{\G}
 =
 (a_1, b_1, c_1) \cdot (a_2, b_2, c_2) \,,
\]
where $(a_i, b_i, c_i)$ is the $S^2$-class of~$\G_i$.
To compute these two $S^2$-classes we employ the ``basic calligraphs'' from \Cref{definition:basic_calligraphs},
whose $S^2$-classes are already known from \Cref{theorem:main}.

Let us write $\calli$ instead of~$\G_1$ to refresh the notation (the same procedure shall be applied to~$\G_2$).
If $\calli = (V, E)$, we construct the following three graphs:
\[
 \calli_\lef := (V, E \cup E_\lef) \,, \quad
 \calli_\righ := (V, E \cup E_\righ) \,, \quad
 \calli_{\cente{v}} := (V \cup \{v\}, E \cup E_{\cente{v}}) \,,
\]
where $v$ is a vertex not in~$\calli$ and $E_\lef$, $E_\righ$, and $E_{\cente{v}}$ are the edges of~$\lef$, $\righ$, and~$\cente{v}$, respectively.
Notice that $(\calli, \lef)$, $(\calli, \righ)$, and $(\calli, \cente{v})$ are calligraphic splits of~$\calli_\lef$, $\calli_\righ$, and $\calli_{\cente{v}}$, respectively.

Suppose that all $\calli_\lef$, $\calli_\righ$, and $\calli_{\cente{v}}$ are minimally rigid.
Let us denote $[\calli] = (a,b,c)$.
\Cref{theorem:main} implies that
\begin{equation}
\label{equation:system}
\begin{aligned}
 (a,b,c) \cdot (1,1,0) &= 2(a-b) = \scount{\calli_\lef} \,, \\
 (a,b,c) \cdot (1,0,1) &= 2(a-c) = \scount{\calli_\righ} \,, \\
 (a,b,c) \cdot (2,0,0) &= 4a = \scount{\calli_{\cente{v}}} \,.
\end{aligned}
\end{equation}
If the right hand sides of \Cref{equation:system} are known,
then we can solve for $(a,b,c)$ and obtain the $S^2$-class of~$\calli$.
To get the values~$\scount{\calli_\lef}$, $\scount{\calli_\righ}$, and~$\scount{\calli_{\cente{v}}}$
we can then reiterate the procedure by finding calligraphic splits of each of these graphs.

Here there is the possibility that one of $\calli_\lef$, $\calli_\righ$, or~$\calli_{\cente{v}}$ equals the initial minimally rigid graph~$\G$.
This would start an infinite recursion,
which hence would not result in an algorithm for the computation of $\scount{\G}$.
To prevent this from happening, we ask that the initial calligraphic split $(\G_1, \G_2)$ is \emph{non-trivial}, namely that both~$\G_1$ and~$\G_2$ have at least $5$ vertices.
In this way, we ensure that the iteration of the procedure does not loop back to the initial situation.

It may happen, however, that one of $\calli_\lef$, $\calli_\righ$, and $\calli_{\cente{v}}$, for example $\calli_\lef$, is not minimally rigid.
Then, since $\calli$ is a calligraph, this means that $\calli_\lef$ has still one degree of freedom or its configuration space is empty.
In these cases the degree of~$\chow{\calli_\lef}$ is zero, so \Cref{equation:degree_product} implies that $(a, b, c) \cdot (1,1,0) = 0$,
namely $a = b$.
We get the equations $a = c$, respectively $a = 0$, if $\calli_{\righ}$, respectively $\calli_{\cente{v}}$, is not minimally rigid.

Therefore, whether the graphs $\calli_\lef$, $\calli_\righ$, and $\calli_{\cente{v}}$ are minimally rigid or not,
they can be used to determine the $S^2$-class of~$\calli$.
Notice that, in order for this recursive procedure to become an algorithm,
it is crucial that the graphs $\calli_\lef$, $\calli_\righ$, and $\calli_{\cente{v}}$,
when minimally rigid, are smaller than the graph~$\G$ we started with.
However, it is not always possible to find a calligraphic split of~$\G$ that has this property.
In this case, we fall back and use the algorithm from \cite{Gallet2020} to compute the number of realizations.

We summarize in \Cref{algorithm:calligraphic_split} the procedure we described in this section.

\renewcommand{\thealgorithm}{\texttt{CountByCalligraphicSplit}} 
\begin{algorithm}[ht]
\caption{}\label{algorithm:calligraphic_split}
\begin{algorithmic}[1]
  \Require A minimally rigid graph~$\G$.
  \Ensure The number $\scount{\G}$ of realizations of~$\G$ on the complex sphere, up to~$\mathrm{SO}_3(\C)$.
  \Statex
  \If{there is no non-trivial calligraphic split of $\G$}
    \State \Return $\scount{\G}$ using the algorithm from \cite{Gallet2020}.
  \EndIf
  \State {\bfseries Construct} a non-trivial calligraphic split $(\G_1, \G_2)$ of $\G$.
  \State \Return \quadform{\class{$\G_1$}}{\class{$\G_2$}}.
\end{algorithmic}
\end{algorithm}

\renewcommand{\thealgorithm}{\texttt{QuadForm}} 
\begin{algorithm}[ht]
\caption{}\label{algorithm:quad_form}
\begin{algorithmic}[1]
  \Require Two integer vectors $(a_1, b_1, c_1)$ and $(a_2, b_2, c_2)$.
  \Ensure Their product according to the quadric form from \Cref{theorem:main}.
  \Statex
  \State \Return $2(a_1 a_2 - b_1 b_2 - c_1 c_2)$.
\end{algorithmic}
\end{algorithm}

\renewcommand{\thealgorithm}{\texttt{S$^2$-Class}} 
\begin{algorithm}[ht]
\caption{}\label{algorithm:class}
\begin{algorithmic}[1]
  \Require A calligraph $\calli$.
  \Ensure The $S^2$-class $[\calli]$.
  \Statex
  \If{$\calli = \lef$}
    \State \Return $(1,1,0)$.
  \ElsIf{$\calli = \righ$}
    \State \Return $(1,0,1)$.
  \ElsIf{$\calli = \cente{v}$}
    \State \Return $(2,0,0)$.
  \EndIf
  \State {\bfseries Initialize} \texttt{eqs} := empty list.
  \For{$F \in \{\lef, \righ, {\cente{v}} \}$}
    \If{$\calli_F$ is minimally rigid}
      \State {\bfseries Compute} $r := \text{CountByCalligraphicSplit}(\calli_F)$.
      \State {\bfseries Append} \quadform{\class{$F$}}{$(a,b,c)$} $= r$ to \texttt{eqs}.
    \Else
      \State {\bfseries Append} \quadform{\class{$F$}}{$(a,b,c)$} $= 0$ to \texttt{eqs}.
    \EndIf
  \EndFor
  \State {\bfseries Solve} the equations in \texttt{eqs} with respect to the variables $a$, $b$, and $c$.
  \State \Return $(a,b,c)$.
\end{algorithmic}
\end{algorithm}

\begin{example}
 Taking the example from \cite[Appendix B]{Grasegger2022} we have a graph with a calligraphic split (see \Cref{fig:pex}).
 Similarly to the plane the realization count on the sphere cannot be computed by previous algorithms but it can by our recursion.
 The two realization counts in the plane and on the sphere differ due to the leaves in the recursion tree but the algorithm is the very same.
 We get
 \[
 \scount{G}=491520 = (384, 0, 0) \cdot (640, 256, 384).
 \]
 The classes of the component graphs can be computed recursively as described in \Cref{algorithm:class}
 using the calligraphic splits from \cite{Grasegger2022}.
 \begin{figure}[ht]
    \centering
    \begin{tikzpicture}[scale=0.4,rotate=-90]
        \node[mvertex] (0) at (2,8) {};
        \node[anchorvertex] (1) at (0,0) {};
        \node[anchorvertex] (2) at (8,0) {};
        \node[vertexh] (3) at (0,10) {};
        \node[vertexh] (4) at (2,10) {};
        \node[vertexh] (5) at (4,8) {};
        \node[vertexh] (6) at (0,8) {};
        \node[vertexh] (7) at (8,10) {};
        \node[vertexh] (8) at (6,8) {};
        \node[vertexh] (9) at (6,6) {};
        \node[vertex] (10) at (2,4) {};
        \node[vertex] (11) at (5,3) {};
        \node[vertex] (12) at (2,2) {};
        \node[vertex] (13) at (4,4) {};
        \node[vertex] (14) at (5,5) {};
        \node[vertex] (15) at (2,6) {};
        \node[vertex] (16) at (4,6) {};
        \draw[edgeh] (3)edge(4);
        \draw[edgeh] (3)edge(0);
        \draw[edgeh] (3)edge(6);
        \draw[edgeh] (4)edge(6);
        \draw[edgeh] (4)edge(7);
        \draw[edgeh] (5)edge(0);
        \draw[edgeh] (5)edge(7);
        \draw[edgeh] (5)edge(8);
        \draw[edgeh] (6)edge(1);
        \draw[edgeh] (6)edge(0);
        \draw[edgeh] (0)edge(9);
        \draw[edgeh] (7)edge(2);
        \draw[edgeh] (7)edge(8);
        \draw[edgeh] (2)edge(9);
        \draw[edgeh] (2)edge(9);
        \draw[edgeh] (9)edge(8);
        \draw[edge] (0)edge(15);
        \draw[edge] (0)edge(16);
        \draw[edge] (1)edge(12);
        \draw[edge] (1)edge(10);
        \draw[edge] (1)edge(11);
        \draw[edge] (2)edge(12);
        \draw[edge] (2)edge(11);
        \draw[edge] (10)edge(15);
        \draw[edge] (10)edge(13);
        \draw[edge] (11)edge(14);
        \draw[edge] (12)edge(13);
        \draw[edge] (13)edge(14);
        \draw[edge] (14)edge(15);
        \draw[edge] (14)edge(16);
        \draw[edge] (16)edge(15);
        \draw[edge, draw=colanchor] (1)edge(2);
    \end{tikzpicture}
    \caption{A calligraphic split of a graph $G=G_1 \cup G_2$.}
    \label{fig:pex}
  \end{figure}
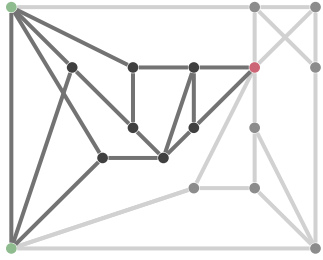
\end{example}

The implementation \cite{SphereAlg} of the algorithm from \cite{Gallet2020} for computing realization counts on the sphere is much slower than the planar implementation \cite{PlaneAlg} for \cite{Capco2018}. This is partially for implementation reasons but mainly due to the algorithms themselves.
For this reason the algorithm described in the current paper can take advantage already for smaller graphs.
A timing comparison can be seen in \Cref{fig:timing}.

\begin{figure}[ht]
	\centering
	\begin{tikzpicture}[yscale=1.5,hpercent/.style={black!40!white,dotted,line width=1pt}]
		\coordinate (o) at (7.5,0);
		\begin{scope}[xshift=8cm]
			\fill[colG] (-0.25,0) -- (-0.25,2) -- (-0.1,2.15)--(0.1,1.85) -- (0.25,2) -- (0.25,0) --cycle;
			\draw[hpercent] (-0.3,1) -- (0.3,1);
		\end{scope}
		\foreach \x [count=\i from 9] in {0.841602, 0.308769, 0.314561}
		{
				\fill[colG] (\i-0.25,0) rectangle (\i+0.25,\x);
				\draw[hpercent] (\i-0.3,1) -- (\i+0.3,1);
		}

		\draw[-{Classical TikZ Rightarrow[]}] (o) -- +(0,2.5) node[left,align=left,labelsty] {\%};
		\draw[-{Classical TikZ Rightarrow[]}] (o) -- +(5,0) node[below right,labelsty] {\#vertices};
		\foreach \y [evaluate=\y as \yc using \y/100] in {0,50,100,150,200} \draw ($(o)+(0,\yc)$) -- +(-0.05,0) node[left,labelsty] {$\y$};
		\foreach \y in {0.25,0.75,...,2} \draw (o)++(0,\y) -- +(-0.05,0);
		\foreach \y in {0.5,1,1.5,2} \draw (o)++(0,\y) -- +(-0.1,0);
		\foreach \x in {8,9,...,11} \draw (\x,0) -- +(0,-0.05) node[below,labelsty] {$\x$};
	\end{tikzpicture}
	\caption{The timing for \Cref{algorithm:calligraphic_split} relative to Algorithm~\cite{Gallet2020}
	using the set of all minimally rigid graphs that have a non-trivial calligraphic split.
	The $100\%$ line represents the timing for Algorithm~\cite{Gallet2020}.
	We see that using the splitting we are already  faster for graphs with at least 9 vertices.
	}
	\label{fig:timing}
\end{figure}
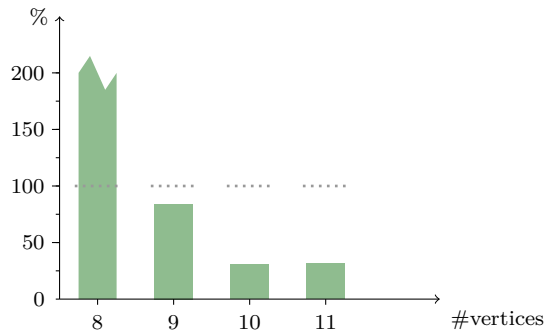

We indicated throughout the paper that the recursive algorithm presented here is indeed the same as the one from \cite{Grasegger2022} just the fallback algorithm is different,
i.\,e. the algorithm that is used when there is no calligraphic split.
With that respect the implementation \cite{CalligraphSoftware} in its Version 1 could be immediately used together with \cite{RigiComp} for doing computations on the sphere.
We updated the implementation to make the workflow simpler.

\appendix
\section{The \texorpdfstring{$S^2$}{S2}-class of \texorpdfstring{$\cente{v}$}{Cv}}\label{sec:s2class}

The goal of this appendix is to prove that the $S^2$-class of $\cente{v}$ is $(2,0,0)$ starting from what we have proven in \Cref{classes}.
Since we have already showen that there are only two possibilities for~$[\cente{v}]$, namely $(2,0,0)$ and $(6,4,4)$, it is enough to show that $[\cente{v}]$ is different from~$(6,4,4)$.

First of all, we describe the exceptional locus of the blowup map $\M{2V_{12}} \rightarrow S_3$ defined in \Cref{description}.
\begin{proposition}
\label{proposition:exceptional_locus}
 The exceptional locus of the blowup map $\M{2V_{12}} \rightarrow S_3$ is the union of the supports of the divisors $D_{I, J}$ such that $\left| I \right| = \left| J \right| = 3$.
\end{proposition}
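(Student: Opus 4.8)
The plan is to prove the two inclusions by exploiting the fact that the blowup morphism contracts each ``balanced'' boundary divisor. Write $\beta \colon \M{2V_{12}} \longrightarrow S_3$ for the blowup map appearing in \Cref{equation:resolution_segre}. By the very description of $\beta$ as the blowup of the ten nodes of $S_3$ (see \cite{Beckmann2022}), and since these ten points are pairwise distinct, the exceptional locus $E$ of $\beta$ is the disjoint union of ten irreducible surfaces $E_1, \dotsc, E_{10}$ --- the exceptional divisor over a node being the projectivized tangent cone there, hence a smooth quadric surface --- one lying over each node. On the other side, the divisors $D_{I,J}$ with $\left|I\right| = \left|J\right| = 3$ are exactly ten in number, each is an irreducible surface (isomorphic to $\M{4} \times \M{4}$), and no two of them coincide. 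So it suffices to show that each such $D_{I,J}$ is contracted by $\beta$ to a point: a contracted irreducible surface is contained in $E$, hence in one of the $E_k$, and being irreducible of dimension two it must equal that $E_k$; as $I \sqcup J$ ranges over the ten balanced partitions the ten surfaces $D_{I,J}$ are pairwise distinct and go to pairwise distinct nodes, so the assignment $D_{I,J} \mapsto E_k$ is an injection between two sets of size ten, hence a bijection, and therefore $\bigcup_{\left|I\right| = \left|J\right| = 3} D_{I,J} = \bigcup_k E_k = E$.

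To see that $D_{I,J}$ is contracted, recall from \Cref{description} and \cite{Howard2009} that the node of $S_3$ attached to a balanced partition $\{P_0, P_1, P_2, Q_0, Q_1, Q_2\} = I \sqcup J$ is the image under $\Gamma$ of the polystable six-tuple having a triple point at a point $a \in \p^1$ carrying the markings of $I$ and a triple point at a point $b \neq a$ carrying the markings of $J$; call it $n_{I,J}$. Fix a point $[C] \in D_{I,J}$, so that $C$ is a two-component stable curve with the markings of $I$ on one component and those of $J$ on the other. I would then choose a one-parameter family $t \mapsto x_t \in \parameter$ of six \emph{distinct} points in $\p^1$ in which, as $t \to 0$, the three markings of $I$ collide to $a$ at relative rates prescribed by the cross-ratios of $C$ on the first component and, symmetrically, the three markings of $J$ collide to $b$; the standard description of limits in $\M{2V_{12}}$ by bubbling of components then gives $[x_t] \to [C]$. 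For $t \neq 0$ the point $[x_t]$ lies in the open locus of smooth six-pointed curves, where $\beta$ is the morphism induced by $\Gamma$, so $\beta([x_t]) = \Gamma(x_t)$; and the limiting configuration $x_0$ is precisely the polystable representative whose $\Gamma$-image is $n_{I,J}$. Since the democratic linearization is base-point free on $\parameter$, the map $\Gamma$ is a morphism there and thus continuous, as is $\beta$, so
\[
 \beta([C]) = \lim_{t \to 0} \beta([x_t]) = \lim_{t \to 0} \Gamma(x_t) = \Gamma(x_0) = n_{I,J} \,.
\]
As $[C]$ was arbitrary, $\beta(D_{I,J})$ is the single point $n_{I,J}$, and the divisor is contracted.

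I expect the only step requiring genuine care to be the construction of this degenerating family: one must exhibit, for an arbitrarily prescribed point of $D_{I,J}$, a family of six distinct points whose limit in $\M{2V_{12}}$ is the given nodal curve and whose limit in $\parameter$ is the chosen polystable six-tuple, and verify that the cross-ratios appearing on the bubbled-off components are the prescribed ones. This is the familiar ``bubbling'' picture of degenerations in $\M{0,n}$, but it is the only non-formal ingredient; once it is in place, everything else --- the continuity of $\Gamma$ and of $\beta$, the irreducibility and dimension count for the $D_{I,J}$, and the matching of the ten divisors with the ten exceptional surfaces --- is immediate.
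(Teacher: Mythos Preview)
Your argument is correct and complete in outline; the degenerating-family step you flag as delicate is indeed standard bubbling in $\M{0,n}$, and your counting/irreducibility bijection between the ten $D_{I,J}$ and the ten exceptional quadrics is sound once one knows that distinct balanced partitions yield distinct nodes (which follows from the description of the strictly polystable orbits).

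The paper takes a rather different, and shorter, route: instead of constructing an explicit degeneration for every boundary point, it invokes GIT directly. It cites \cite[Example~11.6 and Theorem~11.2]{Dolgachev2003} to identify the preimages of the nodes of~$S_3$ under $(\p^1)^6 \dashrightarrow S_3$ with the strictly semistable locus, and then applies the numerical stability criterion to see that this locus consists exactly of the six-tuples in which precisely three points coincide; the image of this locus in~$\M{2V_{12}}$ is by definition the union of the balanced boundary divisors. So the paper trades your hands-on limit computation for a black-box appeal to the GIT description of~$S_3$. Your approach has the advantage of being self-contained and of making the correspondence $D_{I,J} \leftrightarrow n_{I,J}$ explicit, at the cost of the one-parameter family construction; the paper's approach is shorter but presupposes familiarity with the semistability analysis for ordered point configurations.
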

\begin{proof}
 In this proof we use some concepts from the theory of quotients of algebraic varieties, i.\,e., invariant theory.
 Specifically, we use the notion of \emph{stable} and \emph{semistable} elements; for a reference, see \cite[Section~8.1]{Dolgachev2003}.

 As we clarified in \Cref{description}, the variety $S^3$ is a moduli space of $6$-tuples of points in~$\p^1$ with at most three coinciding points. In particular, there is a rational map $\left( \p^1 \right)^6 \dasharrow S^3$ whose general fibers are the orbits of $6$-tuples of different points under the action of $\p\mathrm{GL}_2$.
 This map factors through the map $\M{2V_{12}} \rightarrow S^3$ that is the blowup at the $10$ nodes of~$S^3$, mentioned at the beginning of \Cref{description}, since $\M{2V_{12}}$ is the moduli space of stable curves with $6$ marked points.
 From \cite[Example~11.6]{Dolgachev2003}, we know that the preimages under $\left( \p^1 \right)^6 \rightarrow S_3$ of the nodes of~$S_3$ are constituted of the elements that are semistable but not stable.
 Now, \cite[Theorem~11.2]{Dolgachev2003} states that the latter elements are the $6$-tuples of points in~$\p^1$ such that there are exactly three points coinciding: in fact, using the reference's notation, we have $n = 1$, $m = 6$, $k_i = 1$ for all $i \in \{1, \dotsc, 6\}$ and $W$ can only be a point of~$\p^1$, so the condition for semistability is that the number of points in a tuple that coincides with~$W$ is less than or equal to the quantity
 \[
  \frac{\dim(W) + 1}{n + 1} \left( \sum_{i = 1}^{m} k_i \right) = \frac{1}{2} \cdot 6 = 3 \,.
 \]
 The image in $\M{2V_{12}}$ of the set of these $6$-tuples is precisely the union of the supports of the divisors described in the statement.
 In fact, as explained in \cite[Definition~9]{Gallet2020} where the notation is introduced (slightly modifying the one used in \cite{Keel1992}), the elements of the support of a divisor~$D_{I,J}$ are the stable rational curves that have two components, one with $|I|$ marked points and the other with $|J|$ marked points.
\end{proof}

We now show that $[\cente{v}] = (2,0,0)$.
To do so, it is enough to show that, when $\Lambda \in \M{2\{1,2\}} \times \M{2\{1,v\}} \times \M{2\{2,v\}} \times \M{2\{0,v\}}$ is general and $\widetilde{\Lambda} := \bigl( \Lambda_{1v}, \Lambda_{2v}, \Lambda_{0v} \bigr)$, the curve
\[
 \eta(\redconfig{\cente{v}}{\widetilde{\Lambda}}) \cap \config{\G_{12}}{\Lambda_{12}}
\]
does not intersect any divisor~$D_{I, J}$ of $\M{2V_{12}}$ with $|I| = |J| = 3$, where $\eta \colon \M{2V_{\cente{v}}} \rightarrow \M{2V_{12}}$ is the forgetful morphism.

Indeed, in light of \Cref{proposition:exceptional_locus}, this means that the image of that curve in~$S_3$ does not pass through any of the nodes of the cubic hypersurface~$S_3$, which in turn by \Cref{definition:S2_class} means that the second and third entry of the $S^2$-class $[\cente{v}]$ must be zero, ruling out the possibility $[\cente{v}] = (6, 4, 4)$.
Let us first understand which divisors $D_{I, J} \subset \M{2V_{12}}$ with $|I| = |J| = 3$ intersect $\config{\G_{12}}{\Lambda_{12}}$.
From \cite[Fact~2]{Keel1992}, we know that, for any divisor~$D_{I,J}$ with $|I \cap \{P_1, Q_1, P_2, Q_2\}| = 2$, the edge forgetful morphism $\pi_{\{1,2\}} \colon \M{2V_{12}} \rightarrow \M{2\{1,2\}}$ is constant and equal to one of the three special values $(1:0)$, $(0:1)$, or $(1:1)$.
Notice that the convention we follow is that, for a divisor $D_{I,J}$ in $\M{2V}$, where $G = (V, E)$ is a graph, the sets~$I$ and $J$ are a partition of the set $\{P_w, Q_w\}_{w \in E}$ of marked points, while \cite{Keel1992} denotes these divisors by~$D^T$, where $T$ is a subset of the set~$\{P_w, Q_w\}_{w \in E}$, so one can think that $T$ plays the role of~$I$ and the complement of~$T$ in~$\{P_w, Q_w\}_{w \in E}$ plays the role of~$J$.
However, the variety~$\config{\G_{12}}{\Lambda_{12}}$ is by definition a fiber of~$\pi_{\{1,2\}}$ over a general point of~$\M{2V_{12}}$, hence it cannot intersect divisors $D_{I,J}$ such that $|I \cap \{P_1, Q_1, P_2, Q_2\}| = 2$.
Then, the only divisors~$D_{I,J}$ with $|I| = |J| = 3$ that intersect~$\config{\G_{12}}{\Lambda_{12}}$ are those such that $|I \cap \{P_1, Q_1, P_2, Q_2\}| \not= 2$.
There are only four possibilities (together with the other four conjugate divisors obtained by swapping~$I$ with~$J$), namely:
\[
 I = \{ P_0, Q_0, P_1 \}
 \quad \text{or} \quad
 I = \{ P_0, Q_0, Q_1 \}
 \quad \text{or} \quad
 I = \{ P_0, Q_0, P_2 \}
 \quad \text{or} \quad
 I = \{ P_0, Q_0, Q_2 \}
 \,.
\]

To conclude the computation, notice that, for the curve $\eta(\redconfig{\cente{v}}{\widetilde{\Lambda}}) \cap \config{\G_{12}}{\Lambda_{12}}$ to intersect one of the divisors~$D_{I,J}$ above, it must happen that $\redconfig{\cente{v}}{\widetilde{\Lambda}}$ intersects~$\eta^{-1}(D_{I,J})$.
We focus on~$D_{I,J}$ with $I = \{ P_0, Q_0, P_1 \}$, since the other three cases are completely analogous.
The preimage~$\eta^{-1}(D_{I,J})$ is the union of the supports four divisors in~$\M{2V_{\cente{v}}}$.They are also of the form~$D_{M,N}$, where
\begin{align*}
 M &= \{ P_0, Q_0, P_1, P_v, Q_v \} \,, & N &= \{ Q_1, P_2, Q_2 \} \,, \\
 M &= \{ P_0, Q_0, P_1 \} \,, & N &= \{ Q_1, P_2, Q_2, P_v, Q_v \} \,, \\
 M &= \{ P_0, Q_0, P_1, P_v \} \,, & N &= \{ Q_1, P_2, Q_2, Q_v \} \,, \\
 M &= \{ P_0, Q_0, P_1, Q_v \} \,, & N &= \{ Q_1, P_2, Q_2, P_v \} \,.
\end{align*}
We denote them by~$D_1$, $D_2$, $D_3$, and~$D_4$.
With an argument analogous to the one just used for~$\config{\G_{12}}{\Lambda_{12}}$, the fact that $\Lambda_{0v}$ is general implies that $\redconfig{\cente{v}}{\widetilde{\Lambda}}$ does not intersect~$D_2$ (since $|M \cap \{P_0, Q_0, P_v, Q_v\}| = 2$); the fact that $\Lambda_{1v}$ is general implies that $\redconfig{\cente{v}}{\widetilde{\Lambda}}$ does not intersect~$D_3$ and~$D_4$ (since $|M \cap \{P_1, Q_1, P_v, Q_v\}| = 2$); the fact that $\Lambda_{2v}$ is general implies that $\redconfig{\cente{v}}{\widetilde{\Lambda}}$ does not intersect~$D_1$ (since $|M \cap \{P_2, Q_2, P_v, Q_v\}| = 2$).
This concludes the proof that $[\cente{v}] = (2,0,0)$.

\bibliographystyle{alphaurl}
\bibliography{callisphere}

\bigskip
\bigskip
\textsc{(MG) University of Trieste,
Department of Mathematics and Geosciences,
Via Valerio 12/1, 34127 Trieste, Italy}\\
Email address: \texttt{matteo.gallet@units.it}

\textsc{(GG, NL) Johann Radon Institute for Computational and Applied Mathematics (RICAM), Austrian Academy of Sciences}\\
Email address: \texttt{georg.grasegger@ricam.oeaw.ac.at}\\
\phantom{Email address: }\texttt{info@nielslubbes.com}

\textsc{(JS) Johannes Kepler University Linz, Research Institute for Symbolic Computation (RISC)}\\
Email address: \texttt{josef.schicho@risc.jku.at}

\end{document}